\documentclass[12pt]{amsart}
\usepackage{amssymb,amsmath,amsthm,verbatim}
\usepackage{thmtools,mathtools}
\usepackage[dvipsnames,svgnames,x11names,hyperref]{xcolor}
\usepackage[colorlinks=true,linkcolor=Maroon,citecolor=blue,urlcolor=blue,hypertexnames=false,linktocpage]{hyperref}
\usepackage{lipsum}

\newtheorem{theorem}{Theorem}[section]

\newtheorem{lemma}[theorem]{Lemma}

\newtheorem{corollary}[theorem]{Corollary}
\theoremstyle{definition}
\newtheorem{definition}[theorem]{Definition}

\numberwithin{equation}{section}

\newcommand{\norm}[1]{\left\Vert#1\right\Vert}

\newcommand{\set}[1]{\left\{#1\right\}}
\newcommand{\brac}[1]{\left(#1\right)}
\newcommand{\scalar}[1]{\left \langle #1 \right \rangle}
\newcommand{\sscalar}[1]{\langle #1 \rangle}

\newcommand{\grad}{\operatorname{grad}}
\newcommand{\GL}{\mathrm{GL}}
\newcommand{\II}{\mathrm{II}}
\newcommand{\Leb}{\mathfrak{m}}
\newcommand{\R}{\mathbb{R}}
\renewcommand{\a}{\mathrm{\omega}}
\renewcommand{\S}{\mathbb{S}}
\newcommand{\EE}{\mathcal{E}}
\newcommand{\M}{\mathcal{M}}
\renewcommand{\H}{\mathcal{H}}
\newcommand{\K}{\mathcal{K}}
\newcommand{\F}{\mathcal{F}}

\theoremstyle{remark}
\newtheorem{remark}[theorem]{Remark}

\numberwithin{equation}{section}

\makeatletter
\@namedef{subjclassname@2020}{  \textup{2020} Mathematics Subject Classification}
\makeatother

\topmargin 0.0in
\oddsidemargin 0.42in \evensidemargin 0.42in \textwidth 5.6in
\textheight 8.25in

\begin{document}

\title{$L^p$-Minkowski Problem under Curvature Pinching}
\author[]{Mohammad N. Ivaki\textsuperscript{1} and Emanuel Milman\textsuperscript{2}}

\footnotetext[1]{Institut f\"{u}r Diskrete Mathematik und Geometrie, Technische Universit\"{a}t Wien, Wiedner Hauptstra{\ss}e 8-10, 1040 Wien, Austria. Email: mohammad.ivaki@tuwien.ac.at}

\footnotetext[2]{Department of Mathematics, Technion - Israel
Institute of Technology, Haifa, Israel. Email: emilman@tx.technion.ac.il. \\
This work is supported by the Austrian Science Fund (FWF): Project P36545. The research leading to these results is part of a project that has received funding from the European Research Council (ERC) under the European Union's Horizon 2020 research and innovation programme (grant agreement No 101001677).}

\dedicatory{}
\subjclass[2020]{52A20, 35J96, 53E10, 53C20, 53A15}
\keywords{Uniqueness in $L^p$-Minkowski problem, log-Minkowski inequality, curvature pinching, centro-affine geometry.}
\begin{abstract}
Let $K$ be a smooth, origin-symmetric, strictly convex body in $\R^n$. If for some $\ell\in \GL(n,\R)$, the anisotropic Riemannian metric $\frac{1}{2}D^2 \norm{\cdot}_{\ell K}^2$, encapsulating the curvature of $\ell K$, is comparable to the standard Euclidean metric of $\R^{n}$ up-to a factor of $\gamma > 1$, we show that $K$ satisfies the even $L^p$-Minkowski inequality and uniqueness in the even $L^p$-Minkowski problem for all $p \geq p_\gamma := 1 - \frac{n+1}{\gamma}$. This result is sharp as $\gamma \searrow 1$ (characterizing centered ellipsoids in the limit) and improves upon the classical Minkowski inequality for all $\gamma < \infty$. In particular, whenever $\gamma \leq n+1$, the even log-Minkowski inequality and uniqueness in the even log-Minkowski problem hold. 
\end{abstract}
\maketitle

\section{Introduction}

A central question in contemporary Brunn--Minkowski theory is that of existence and uniqueness in the $L^p$-Minkowski problem for $p \in (-\infty,1)$: given a finite non-negative Borel measure $\mu$ on the Euclidean unit-sphere $\S^{n-1}$, determine conditions on $\mu$ which ensure the existence and/or  uniqueness of a convex body $K$ in $\R^n$ so that
\begin{equation} \label{eq:intro-Lp-Minkowski}
S_p K := h_K^{1-p} S_K  = \mu  . 
\end{equation}
Here $h_K$ and $S_K$ denote the support function and surface-area measure of $K$, respectively -- we refer to Section \ref{sec:prelim} for standard missing definitions. When $h_K \in C^2(\S^{n-1})$, 
\[
S_K = \det(\bar \nabla^2 h_K + h_K \bar \delta) \Leb,
\]
where $\bar \nabla$, $\bar \delta$ and $\Leb$ denote the Levi-Civita connection, standard Riemannian metric and induced Lebesgue measure on $\S^{n-1}$, respectively. Consequently, (\ref{eq:intro-Lp-Minkowski}) is a Monge--Amp\`ere-type equation.

The case $p=1$ above corresponds to the classical Minkowski problem of finding a convex body with prescribed surface-area measure; when $\mu$ is not concentrated on any hemisphere and its barycenter is at the origin, existence and uniqueness (up to translation) of $K$ were established by 
Minkowski, Alexandrov and Fenchel--Jessen (see \cite{Schneider-Book-2ndEd}), and regularity of $K$ was studied by Lewy \cite{Lewy-RegularityInMinkowskiProblem}, Nirenberg \cite{Nirenberg-WeylAndMinkowskiProblems}, Cheng--Yau \cite{ChengYau-RegularityInMinkowskiProblem}, Pogorelov \cite{Pogorelov-MinkowskiProblemBook}, Caffarelli \cite{CaffarelliHigherHolderRegularity,Caffarelli-StrictConvexity} and many others. 
The extension to general $p$ was put forth and promoted by Lutwak \cite{Lutwak-Firey-Sums} as an $L^p$-analog of the Minkowski problem for the $L^p$ surface-area measure $S_p K = h_K^{1-p} S_K$ which he introduced. 
Existence and uniqueness in the class of origin-symmetric convex bodies, when the measure $\mu$ is even and not concentrated in a hemisphere, was established for $n \neq p > 1$ by Lutwak \cite{Lutwak-Firey-Sums} and for $p = n$ by Lutwak--Yang--Zhang \cite{LYZ-LpMinkowskiProblem}. A key tool in the range $p \geq 1$ is the prolific $L^p$-Brunn--Minkowski theory, initiated by Lutwak \cite{Lutwak-Firey-Sums,Lutwak-Firey-Sums-II} following Firey \cite{Firey-Sums},
and developed by Lutwak--Yang--Zhang (e.g. \cite{LYZ-LpAffineIsoperimetricInqs,LYZ-SharpAffineLpSobolevInqs,LYZ-LpJohnEllipsoids}) and others, which extends the classical $p=1$ case. Further existence, uniqueness and regularity results in the range $p > 1$ under various assumptions on $\mu$ were obtained in \cite{ChouWang-LpMinkowski,GuanLin-Unpublished,HuangLu-RegularityInLpMinkowski,HLYZ-DiscreteLpMinkowski,LutwakOliker-RegularityInLpMinkowski,Zhu-ContinuityInLpMinkowski}.

The case $p < 1$ turns out to be more challenging because of the lack of an appropriate $L^p$-Brunn--Minkowski theory. Existence, (non-)uniqueness and regularity under various conditions on $\mu$ were studied by numerous authors when $p<1$, especially after the important work by Chou--Wang \cite{ChouWang-LpMinkowski}, see e.g. \cite{BBC-SmoothnessOfLpMinkowski,BBCY-SubcriticalLpMinkowski,BHZ-DiscreteLogMinkowski,BoroczkyHenk-ConeVolumeMeasure,ChenEtAl-LocalToGlobalForLogBM,ChenLiZhu-LpMongeAmpere,ChenLiZhu-logMinkowski,GLW-SupercriticalLpMinkowski,HeLiWang-MultipleSupercriticalLpMinkowski,  JianLuZhu-UnconditionalCriticalLpMinkowski,LuWang-CriticalAndSupercriticalLpMinkowski,EMilman-Isospectral-HBM,StancuDiscreteLogBMInPlane,Stancu-UniquenessInDiscretePlanarL0Minkowski,Stancu-NecessaryCondInDiscretePlanarL0Minkowski,Zhu-logMinkowskiForPolytopes,Zhu-CentroAffineMinkowskiForPolytopes,Zhu-LpMinkowskiForPolytopes,Zhu-LpMinkowskiForPolytopesAndNegativeP}. The critical exponent for the $L^p$-Minkowski problem is $p=-n$, when the $L^{-n}$ surface-area measure $S_{-n} K$ becomes centro-affine invariant -- see the comments after Corollary \ref{cor:main} below.

Being a self-similar solution to the isotropic Gauss curvature flow, the case $p=0$ and $\mu = \Leb$ of (\ref{eq:intro-Lp-Minkowski}) describes the ultimate fate of a worn stone in a model proposed by Firey \cite{Firey-ShapesOfWornStones}. This model was extended and further studied in 
\cite{Andrews-GaussCurvatureFlowForCurves,Andrews-FateOfWornStones,Andrews-PowerOfGaussCurvatureFlow,AndrewsGuanNi-PowerOfGaussCurvatureFlow,BCD-PowerOfGaussCurvatureFlow,ChoiDaskalopoulos-UniquenessInLpMinkowski,Chow-PowerOfGaussCurvatureFlow,Urbas-PositivePowersOfGaussCurvatureFlow,Urbas-NegativePowersOfGaussCurvatureFlow}.
In the general anisotropic $\alpha$-power-of-Gauss-curvature flow, $x : \S^{n-1} \times [0,T) \rightarrow \R^n$ evolves according to
\begin{equation} \label{eq:intro-anisotropic-flow}
\frac{\partial x}{\partial t} = -  (\rho_K \cdot \kappa_{L_t})^\alpha(\nu_{L_t}(x)) \;  \nu_{L_t}(x) ~,~ \rho_K := \frac{dS_{p} K}{d \Leb} ,
\end{equation}
where $\nu_{L_t}$ is the outer unit-normal to $\partial L_t := x(\S^{n-1},t)$ and $\kappa_{L_t}$ is the corresponding Gauss-curvature. Here $K$ is fixed, and $\rho_K$ serves the role of an anisotropic potential, becoming constant in the isotropic case when $K$ is a Euclidean ball. If $\alpha = \frac{1}{1-p}$, it is straightforward to check that self-similar solutions $L$ to (\ref{eq:intro-anisotropic-flow}) of the form $x(t) = c(t) x$ will satisfy $S_p L = c \, S_p K$. Consequently, to uniquely determine the ultimate fate of a corresponding worn stone, 
one is interested to know whether
\begin{equation} \label{eq:intro-Lp-Minkowski-Uniqueness}
 S_p L = S_p K  \; \Rightarrow \; L = K  . 
\end{equation}
Following contributions in \cite{Andrews-FateOfWornStones,AndrewsGuanNi-PowerOfGaussCurvatureFlow,ChoiDaskalopoulos-UniquenessInLpMinkowski,Chow-PowerOfGaussCurvatureFlow,Firey-ShapesOfWornStones,HuangLiuXu-UniquenessInLpMinkowski}, uniqueness in (\ref{eq:intro-Lp-Minkowski-Uniqueness}) for the general isotropic case in the full range $p \in (-n,1)$ was resolved by Brendle--Choi--Daskalopoulos in \cite{BCD-PowerOfGaussCurvatureFlow} (see also \cite{Saroglou-GeneralizedBCD, IvakiEMilman-GeneralizedBCD} for alternative proofs). In the origin-symmetric case, an extension of their uniqueness result from the case that $K$ is a Euclidean ball to arbitrary centered ellipsoids for any $p \in (-n,1)$ was observed in \cite[Remark 2.7]{EMilman-IsomorphicLogMinkowski}.

 The case $p=0$ is of particular recent importance not only because of its connection to Firey's original problem, but also as it corresponds to the \emph{log-Minkowski problem} for the cone-volume measure
\[
V_K := \frac{1}{n} h_K S_K = \frac{1}{n} S_0 K .
\]
This nomenclature is derived from the fact that $V_K$ is obtained as the push-forward of the cone-measure on $\partial K$ onto $\S^{n-1}$ via the Gauss map, so that the total mass of $V_K$ is $V(K)$, the volume of $K$. 
In \cite{BLYZ-logMinkowskiProblem}, B\"or\"oczky--Lutwak--Yang--Zhang showed that an \emph{even} measure $\mu$ is the cone-volume measure $V_K$ of an \emph{origin-symmetric} convex body $K$ if and only if it satisfies a certain subspace concentration condition, thereby completely resolving the existence part of the \emph{even} log-Minkowski problem. 
As put forth by B\"or\"oczky--Lutwak--Yang--Zhang in their influential work \cite{BLYZ-logBMInPlane,BLYZ-logMinkowskiProblem} and further developed in \cite{KolesnikovEMilman-LocalLpBM,EMilman-IsomorphicLogMinkowski}, the uniqueness question is intimately related to the validity of a conjectured $L^0$- (or log-)Minkowski inequality for a pair of origin-symmetric convex bodies, which would constitute a remarkable strengthening of the classical $p=1$ case.  Note that the restriction to origin-symmetric bodies is natural, and necessitated by the fact that no $L^p$-Minkowski inequality nor uniqueness in the $L^p$-Minkowski problem can hold for general convex bodies when $p < 1$ \cite{Andrews-ClassificationOfLimitingShapesOfIsotropicCurveFlows,ChenLiZhu-LpMongeAmpere,ChenLiZhu-logMinkowski,ChouWang-LpMinkowski,HeLiWang-MultipleSupercriticalLpMinkowski,JianLuWang-NonUniquenessInSubcriticalLpMinkowski,KolesnikovEMilman-LocalLpBM,Li-NonUniquenessInCriticalLpMinkowskiProblem,LLL-NonUniquenessInDualLpMinkowskiProblem,EMilman-Isospectral-HBM,Stancu-UniquenessInDiscretePlanarL0Minkowski}. Establishing the uniqueness in the even log-Minkowski problem (for strictly convex, smooth, origin-symmetric convex bodies) is nowadays a major central open problem of fundamental importance \cite{Boroczky-LogBMSurvey,BLYZ-logMinkowskiProblem,BLYZ-logBMInPlane,HKL-LogBMForSubsets,Kolesnikov-OTOnSphere,KolesnikovLivshyts-ParticularFunctionsInLogBM,KolesnikovEMilman-LocalLpBM,LMNZ-BMforMeasures,EMilman-IsomorphicLogMinkowski,Saroglou-logBM1,Saroglou-logBM2}; see below for additional information and partial results.

In this work, we establish the uniqueness in the even $L^p$-Minkowski problem (\ref{eq:intro-Lp-Minkowski-Uniqueness}) and the corresponding even $L^p$-Minkowski inequality for an appropriate range of $p$'s in $(-n,1)$, with particular emphasis on the case $p=0$, under a curvature pinching condition on $K$, as described next. 

\subsection{Main Results}
A convex compact subset $K$ of $\R^n$ with non-empty interior is called a \emph{convex body}. We shall always assume in this work that the origin is an interior point of $K$. The class of convex bodies in $\R^n$ with $C^{m}$ (respectively, $C^{m,\a}$ for some $\a \in (0,1]$) smooth boundary, strictly positive curvature, and containing the origin in their interior is denoted by $\K^{m}_+$ (respectively, $\K^{m,\a}_+$), $m \geq 2$. The class of origin-symmetric convex bodies in $\R^n$ is denoted by $\K_e$, and we set $\K^{m,\a}_{+,e} := \K^{m,\a}_+ \cap \K_e$.

We denote by $\nu_K(x)$ the outer unit-normal to $K$ at $x \in \partial K$, and by $\II_{\partial K}(x)$ the second fundamental form. The Minkowski gauge function of a convex body $K$ is defined as
\[
\norm{x}_K :=\inf\{t \geq 0: x\in t K\} \quad \forall x\in \R^n ,
\]
and its support function is defined by
\[
h_K(x) := \max_{y\in K} \scalar{x,y} \quad \forall x\in \R^n.
\]
Both functions are convex and positively $1$-homogeneous. 

We denote by $\delta=\sscalar{\cdot\,,\cdot}$ the standard Euclidean metric of $\R^n$, and by $\delta_{\partial K}$ 
the induced Euclidean metric on $\partial K$. The Euclidean unit-ball in $\R^n$ is denoted by $B_2^n$, and the group of non-singular linear transformations is denoted by $\GL(n,\R)$. For a pair of symmetric $2$-tensors $\Theta_1,\Theta_2$ we write $\Theta_1\leq \Theta_2$ to indicate that $\Theta_2 - \Theta_1$ is positive semi-definite. 

\smallskip

To better understand the formulation of our main results below, we first record the following geometric implication of having pinching estimates on $D^2 \frac{\norm{x}_{K}^2}{2}$, which we shall identify as an anisotropic Riemannian metric in Section \ref{sec:prelim}.

\begin{lemma} \label{lem:intro}
Let $K \in \K^2_+$. If $\alpha \delta \leq  D^2 \frac{\norm{x}_{K}^2}{2} \leq \beta \delta$ for some $0 < \alpha \leq \beta$ and all $x \in \partial K$ (equivalently, $x \in \S^{n-1}$), then
\begin{equation} \label{eq:intro-radius-pinching}
\frac{1}{\sqrt{\beta}} B_2^n \subset K \subset \frac{1}{\sqrt{\alpha}} B_2^n
\end{equation}
and
\begin{equation} \label{eq:intro-II-pinching}
 \alpha \delta_{\partial K} \leq \frac{\II_{\partial K}(x)}{\scalar{x,\nu_K(x)}} \leq \beta \delta_{\partial K} \;\;\; \forall x \in \partial K .
 \end{equation}
\end{lemma}

While the above implication is not an equivalence, one can show that up to a degradation of constants, (\ref{eq:intro-radius-pinching}) and (\ref{eq:intro-II-pinching}) imply back a pinching estimate on $D^2 \frac{\norm{x}_{K}^2}{2}$. We will not insist on this here, but rather derive a more precise statement, extending Lemma \ref{lem:intro}, in Lemma \ref{lem:curvature-pinching}.

\begin{theorem}\label{thm:main}
Let $K \in \K^{2,\a}_{+,e}$, $\ell \in \GL(n,\R)$, and  $0 < \alpha < \beta$ be such that
\begin{equation} \label{eq:intro-pinching}
\alpha \delta \leq  D^2 \frac{\norm{x}_{\ell K}^2}{2}\leq \beta \delta \quad \forall x \in \S^{n-1} . 
\end{equation}
Denote
\[
 \gamma := \frac{\beta}{\alpha} \in (1,\infty), \quad\quad  p_\gamma := 1 - \frac{n+1}{\gamma} .
\]
Then for all $p \in [p_\gamma,1)$, the even $L^p$-Minkowski problem for $K$ has a unique solution:
\begin{equation} \label{eq:main-Lp-Minkowski-Uniqueness}
\forall L \in \K_e, \;\; \; \;  S_p L = S_p K  \; \Rightarrow \; L = K ,
\end{equation}
and $K$ satisfies the even $L^p$-Minkowski inequality:
\begin{equation} \label{eq:main-Lp-Minkowski-Inq}
\forall L \in \K_e  \;\;\;\; \frac{1}{p} \int_{\S^{n-1}} h_L^{p} dS_p K  \geq \frac{n}{p} V(K)^{1-\frac{p}{n}} V(L)^{\frac{p}{n}} ,
\end{equation}
with equality if and only if $L = c K$ for some $c > 0$.
\end{theorem}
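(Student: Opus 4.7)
The plan is first to reduce to $\ell = \mathrm{Id}$, then to extract a sharp pinching-dependent spectral estimate for the Hilbert--Brunn--Minkowski operator of $K$, and finally to close via a continuity argument. For the reduction, the $\GL(n,\R)$-action $(K,L) \mapsto (\ell K, \ell L)$ scales $V_p(K,L) := \frac{1}{n}\int h_L^p \, dS_p K$ and $V(K)^{1-p/n} V(L)^{p/n}$ by the same factor $|\det \ell|$, and is compatible with the condition $S_p L = S_p K$ under the corresponding pullback of measures on $\S^{n-1}$. Hence (\ref{eq:intro-pinching}) may be assumed to hold for $K$ itself, and Lemma~\ref{lem:intro} then yields the pointwise pinching (\ref{eq:II-pinching}) of $\II_{\partial K}/\langle x, \nu_K\rangle$ between $\alpha \delta_{\partial K}$ and $\beta \delta_{\partial K}$, with pinching ratio $\gamma = \beta/\alpha$.

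Next, I would prove uniqueness (\ref{eq:main-Lp-Minkowski-Uniqueness}). Let $L \in \K_e$ satisfy $S_p L = S_p K$; both $h_K$ and $h_L$ solve the same Monge--Amp\`ere equation $h^{1-p}\det(\bar\nabla^2 h + h \bar\delta) = \rho$. Consider the even quotient $Z := h_L/h_K$ on $\S^{n-1}$ and work with the Hilbert--Brunn--Minkowski operator $\mathcal{L}_K z := -\mathrm{div}(\mathrm{cof}(A_K) \bar\nabla z)/\det A_K$ associated to $A_K := \bar\nabla^2 h_K + h_K \bar\delta$. Following the maximum-principle strategy of \cite{IvakiEMilman-GeneralizedBCD} together with the spectral framework of \cite{KolesnikovEMilman-LocalLpBM}, uniqueness reduces to a Poincar\'e-type estimate
\[
\int_{\S^{n-1}} \langle \mathrm{cof}(A_K) \bar\nabla z, \bar\nabla z \rangle \, d\Leb \;\geq\; \frac{n+1}{\gamma} \int_{\S^{n-1}} z^2 \, dS_K
\]
for smooth even $z$ with $\int z \, dS_K = 0$. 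For $K = B_2^n$ the Hilbert operator reduces (up to normalization) to the spherical Laplacian, whose second eigenvalue on even non-constant spherical harmonics equals $n+1$, recovering the Brendle--Choi--Daskalopoulos critical exponent $p = -n$. The II-pinching transfers to a comparison with this ball model: the quadratic form of $\mathrm{cof}(A_K)$ and the measure $S_K$ relate to their ball analogs with eigenvalue distortion at most $\gamma$, yielding the factor $1/\gamma$. Once this spectral gap is in hand, $Z$ must be constant whenever $p \geq p_\gamma$; evenness together with volume matching then forces $L = K$.

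The inequality (\ref{eq:main-Lp-Minkowski-Inq}) follows from uniqueness by a standard continuity/deformation argument: since $p_\gamma > -n$, existence in the even $L^p$-Minkowski problem applies along an $L^p$-interpolating path from $K$ to $L$, and monotonicity of the appropriate functional along the path combined with the uniqueness statement yields the inequality with equality characterization $L = cK$. The principal obstacle is extracting the sharp factor $(n+1)/\gamma$ in the spectral estimate: a coarse operator-norm comparison of $\mathrm{cof}(A_K)$ with $\mathrm{Id}$ would lose powers of the dimension, while the argument must be linear in $1/\gamma$ so that $\gamma \searrow 1$ recovers the full range $p \in (-n,1)$ in the ellipsoid case. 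Achieving this linearity requires careful integration by parts using the divergence identity $\mathrm{div}\,\mathrm{cof}(A_K) = 0$ for the cofactor of a Hessian-type matrix, replacing the matrix weight by scalar quantities that can be pinched eigenvalue-by-eigenvalue.
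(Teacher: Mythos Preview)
Your overall architecture is right: reduce to $\ell = \mathrm{Id}$, establish a spectral gap for the Hilbert--Brunn--Minkowski operator under the pinching, then invoke the local-to-global principle (Theorem~\ref{thm:l2g}) to obtain uniqueness and the $L^p$-Minkowski inequality. But the heart of the proof---the spectral estimate itself---has a genuine gap, together with a numerical slip: the first nontrivial \emph{even} eigenvalue of the spherical Laplacian on $\S^{n-1}$ is $2n$ (degree-$2$ harmonics), not $n+1$, and the correct target is $\lambda_{1,e}(-\Delta_K) > n - p_\gamma = (n-1) + (n+1)/\gamma$, which indeed equals $2n$ at $\gamma=1$. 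Your displayed Poincar\'e inequality, with constant $(n+1)/\gamma$ and without the $h_K$-weights distinguishing $V_K$ from $S_K$, is not the estimate that is actually required.

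More importantly, you correctly flag that a naive comparison of Rayleigh quotients to the ball loses powers of $\gamma$ through $\mathrm{cof}(A_K)$ and $\det A_K$, but your proposed fix---``careful integration by parts using $\mathrm{div}\,\mathrm{cof}(A_K)=0$''---is not an argument, and I do not see how it would produce a bound linear in $1/\gamma$. The paper's mechanism is quite different and is the real content: given an even test function $f$, form the $\R^n$-valued section $F = dX_K(\grad_{g_K} f) + \tau f\, X_K$ (the anisotropic gradient of the $\tau$-homogeneous extension of $f$), observe that each Euclidean component $F_k = \langle F, E_k\rangle$ is \emph{odd}, and apply the \emph{first} (odd) local Brunn--Minkowski inequality $\lambda_1(-\Delta_K)=n-1$ to each $F_k$. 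The pinching $\alpha\delta \le P_K \le \beta\delta$ is used exactly once on each side to convert between $|\cdot|_\delta$ and $|\cdot|_{p_K}$, which is precisely why the loss is a single factor of $\gamma$ rather than $\gamma^{n-1}$. Combining the resulting inequality with the centro-affine Bochner formula (\ref{eq:Bochner}) yields a quadratic constraint $P_{\sigma,\tau}(\lambda_{1,e}) \ge 0$ for every $\tau$, and the choice $\tau = 1 + 1/\gamma$ gives the claimed bound. None of these ingredients---the vector-valued lift, the parity trick that feeds the even problem into the odd Poincar\'e inequality, or the Bochner step---appears in your outline, and without them the spectral estimate, and hence the theorem, is not established.
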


Applying this to the case $\gamma = n+1$, we immediately obtain a condition ensuring uniqueness in the important logarithmic case $p=0$, which is interpreted in the limiting sense as follows: 
\begin{corollary} \label{cor:main}
With the same assumptions and notation as in Theorem \ref{thm:main}, assume that
\[
\beta \leq (n+1) \alpha . 
\]
Then the even log-Minkowski problem for $K$ has a unique solution:
\begin{equation} \label{eq:main-log-Minkowski-Uniqueness}
\forall L \in \K_e, \;\; \; \;  V_L = V_K  \; \Rightarrow \; L = K ,
\end{equation}
 and $K$ satisfies the even log-Minkowski inequality:
\begin{equation} \label{eq:main-log-Minkowski-Inq}
 \forall L \in \K_e \;\;\; \frac{1}{V(K)} \int_{\S^{n-1}} \log \frac{h_L}{h_K} dV_K \geq  \frac{1}{n} \log \frac{V(L)}{V(K)} ,
 \end{equation}
with equality if and only if $L = c K$ for some $c > 0$. 
\end{corollary}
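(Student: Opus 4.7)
The plan is to derive Corollary~\ref{cor:main} from Theorem~\ref{thm:main} by specialization to $p = 0$. The hypothesis $\beta \leq (n+1)\alpha$ translates to $\gamma = \beta/\alpha \leq n+1$, hence $p_\gamma = 1 - (n+1)/\gamma \leq 0$, so $0 \in [p_\gamma, 1)$ and a right-neighborhood $(0,\varepsilon) \subset [p_\gamma, 1)$ is also available. The uniqueness claim (\ref{eq:main-log-Minkowski-Uniqueness}) is then immediate: since $V_L = \frac{1}{n} h_L S_L = \frac{1}{n} S_0 L$ by definition, the condition $V_L = V_K$ is equivalent to $S_0 L = S_0 K$, which by (\ref{eq:main-Lp-Minkowski-Uniqueness}) at $p = 0$ forces $L = K$.

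For the inequality (\ref{eq:main-log-Minkowski-Inq}), I would apply (\ref{eq:main-Lp-Minkowski-Inq}) for $p \in (0, 1)$ and pass to the limit $p \to 0^+$. Rewriting $\frac{1}{p}\int h_L^p\, dS_p K = \frac{1}{p}\int h_K \brac{h_L/h_K}^p\, dS_K$, using the identity $h_K\, dS_K = n\, dV_K$, and Taylor-expanding $(h_L/h_K)^p$ and $V(K)^{1-p/n} V(L)^{p/n} = V(K)\brac{V(L)/V(K)}^{p/n}$ to first order in $p$, the singular $\tfrac{n V(K)}{p}$ contributions on both sides of (\ref{eq:main-Lp-Minkowski-Inq}) cancel and the next-order terms yield
\[
n \int_{\S^{n-1}} \log(h_L/h_K)\, dV_K - V(K) \log(V(L)/V(K)) \geq 0 ,
\]
which upon dividing by $n V(K)$ is precisely (\ref{eq:main-log-Minkowski-Inq}). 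The direction $L = cK \Rightarrow$ equality is by direct substitution.

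The main obstacle is the converse equality case, since strict inequality in (\ref{eq:main-Lp-Minkowski-Inq}) at each $p > 0$ does not automatically preclude equality in the $p \to 0^+$ limit. When $\gamma < n+1$ strictly, so that $p_\gamma < 0$ and (\ref{eq:main-Lp-Minkowski-Inq}) is available on both sides of $p = 0$, I would set $\phi(p) := \frac{1}{V(K)}\int (h_L/h_K)^p\, dV_K$ and exploit convexity of $\phi$ together with the reversed inequality for $p < 0$ and the equality assumption (which forces $\phi'(0) = \frac{1}{n}\log(V(L)/V(K))$). This pins $\phi(p)$ to $(V(L)/V(K))^{p/n}$ on $[p_\gamma, 0]$; differentiating twice at $0$ then yields vanishing variance of $\log(h_L/h_K)$ against $dV_K/V(K)$, so $h_L/h_K$ is constant on $\S^{n-1}$ (using that $S_K$ has full support by strict convexity and smoothness of $K$), whence $L = cK$. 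The boundary case $\gamma = n+1$, where only $p \in [0,1)$ is available, instead requires rescaling $L$ to have $V(L) = V(K)$ and then extracting the Euler--Lagrange identity $V_L = V_K$ from equality in the log form, after which uniqueness (\ref{eq:main-log-Minkowski-Uniqueness}) finishes the argument.
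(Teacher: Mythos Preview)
Your route is more laborious than the paper's. In the paper, the Corollary is an immediate specialization of Theorem~\ref{thm:main} to $p=0$: the local-to-global principle (Theorem~\ref{thm:l2g}) underlying Theorem~\ref{thm:main} explicitly treats $p=0$, interpreting (\ref{eq:main-Lp-Minkowski-Inq}) as (\ref{eq:main-log-Minkowski-Inq}) and delivering the equality characterization directly. Once $\gamma\leq n+1$ so that $p_\gamma\leq 0$, both (\ref{eq:main-log-Minkowski-Uniqueness}) and (\ref{eq:main-log-Minkowski-Inq}) together with its equality case are already contained in the conclusion of Theorem~\ref{thm:main} at $p=0$; nothing further is required.

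Your limiting derivation of (\ref{eq:main-log-Minkowski-Inq}) from (\ref{eq:main-Lp-Minkowski-Inq}) is correct. For the equality case when $\gamma<n+1$, however, plain convexity of $\phi(p)=\frac{1}{V(K)}\int (h_L/h_K)^p\,dV_K$ does \emph{not} pin $\phi$ to $\psi(p)=(V(L)/V(K))^{p/n}$ on $[p_\gamma,0]$: convexity only yields the tangent bound $\phi(p)\geq 1+p\,\phi'(0)$, and since $1+cp<e^{cp}$ for $p\neq 0$ this leaves a genuine gap between the lower and upper constraints. What makes the argument work is \emph{log}-convexity of $\phi$ (H\"older's inequality), which gives $\log\phi(p)\geq p\,\phi'(0)=\log\psi(p)$ and hence, combined with $\phi\leq\psi$ on $[p_\gamma,0)$, forces $\phi\equiv\psi$ there. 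More directly still: equality at $p=0$ together with the monotonicity of $p\mapsto\bigl(\frac{1}{V(K)}\int (h_L/h_K)^p\,dV_K\bigr)^{1/p}$ and the $L^p$-inequality on $[p_\gamma,0)$ forces equality in (\ref{eq:main-Lp-Minkowski-Inq}) at each such $p$, whence $L=cK$ by Theorem~\ref{thm:main}'s own equality clause. Your variational sketch for the boundary case $\gamma=n+1$ (extracting $V_L=V_K$ as an Euler--Lagrange condition from the minimality of $L$) is plausible but would need justification for general $L\in\K_e$, and is in any event superfluous given the paper's direct route.
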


In fact, we shall see that our proof yields a slightly better estimate for $p_\gamma$ as a function of $\gamma$, but we have stated the most elegant formulation above. Note that $\gamma < \infty$ by compactness of $\S^{n-1}$ and strict convexity of $K$, and so the above results always yield a non-trivial improvement over the classical uniqueness in the ($L^1$-) Minkowski problem and Minkowski inequality, which however apply to arbitrary (possibly non-symmetric nor smooth) convex bodies. As explained in \cite[Remark 1.4]{EMilman-IsomorphicLogMinkowski}, the inequalities (\ref{eq:main-Lp-Minkowski-Inq}) and (\ref{eq:main-log-Minkowski-Inq}) also hold for $K \in \K_e$ which are obtained as Hausdorff limits of $K_i$'s satisfying the assumption (\ref{eq:intro-pinching}).

Furthermore, note that Theorem \ref{thm:main} is sharp for the case that $\gamma= \frac{\beta}{\alpha} = 1$, when $K$ is necessarily an origin-symmetric ellipsoid $\EE$. That case was excluded from the formulation since $p_1 = 1 - \frac{n+1}{1} = -n$ is the critical exponent for the $L^p$-Minkowski problem, due to the $\GL(n,\R)$ equivariance of the centro-affine Gauss curvature $\frac{\Leb}{S_{-n} K} = \kappa_K / h_K^{1+n}$ \cite{Tzitzeica1908,ChouWang-LpMinkowski}; in particular, the centro-affine Gauss curvature of any centered ellipsoid $\EE$ in $\R^n$ is constant and depends only on its volume: $S_{-n} \EE = c_n V(\EE)^2 \Leb$, so no uniqueness can hold when $p=-n$. However, applying Theorem \ref{thm:main} with $\gamma \searrow 1$, we recover  
(in the origin-symmetric case) the sharp uniqueness result by Brendle--Choi--Daskalopoulos \cite{BCD-PowerOfGaussCurvatureFlow} in the full range $p \in (-n,1)$ and its extension to centered ellipsoids from \cite[Remark 2.7]{EMilman-IsomorphicLogMinkowski}. We remark that centered ellipsoids are in fact the only complete elliptic hypersurfaces with constant centro-affine Gauss curvature, as shown by Calabi \cite{Calabi-CompleteAffineHyperspheres}.

\subsection{Comparison with previous work}

The validity of the log-Minkowski inequality (\ref{eq:main-log-Minkowski-Inq}) for all $K \in \K_e$, including characterization of its equality cases, as well as the uniqueness in the even log-Minkowski problem (\ref{eq:main-log-Minkowski-Uniqueness}) for $K \in \K_e$ which is not a parallelogram, was established when $n=2$ by B\"or\"oczky--Lutwak--Yang--Zhang \cite{BLYZ-logBMInPlane} (see also \cite{MaLogBMInPlane,Putterman-LocalToGlobalForLpBM,XiLeng-DarAndLogBMInPlane,Xi-ReverseLogBM} for alternative derivations). The even log-Minkowski inequality has also been established in $\R^n$ for all zonoids $K$ \cite{VanHandel-LogMinkowskiForZonoids, Xi-ReverseLogBM}, under various symmetry assumptions \cite{BoroczkyKalantz-LogBMWithSymmetries,Rotem-logBM,Saroglou-logBM1}, for perturbations of the Euclidean ball \cite{ChenEtAl-LocalToGlobalForLogBM} (following \cite{ColesantiLivshyts-LocalpBMUniquenessForBall,CLM-LogBMForBall,KolesnikovEMilman-LocalLpBM}), and locally for perturbations of linear images of the unit-balls of $\ell_p^n$ \cite{KolesnikovEMilman-LocalLpBM}. 
However, the general case remains open for $n \geq 3$.

By combining the results of \cite{KolesnikovEMilman-LocalLpBM,ChenEtAl-LocalToGlobalForLogBM, Klartag-AlmostKLS}, the even $L^p$-Minkowski inequality (\ref{eq:main-Lp-Minkowski-Inq}) and uniqueness in the even $L^p$-Minkowski problem (\ref{eq:main-Lp-Minkowski-Uniqueness}) are known to hold in $\R^n$ for $p \in [1 - \frac{c}{n \log(1+n)} , 1)$, for some universal constant $c > 0$. Consequently, Theorem \ref{thm:main} gives no new information whenever $\gamma \geq C n^2 \log (1+n)$.

The conclusion of Theorem \ref{thm:main} was previously established for a worse range of $p$'s and under a slightly weaker curvature pinching estimate in \cite[Theorem 1.2, Theorem 6.3]{EMilman-IsomorphicLogMinkowski}. Namely, it was shown that if (\ref{eq:intro-II-pinching}) holds for $\ell K$ and in addition $\ell K \subset R B_2^n$, then the conclusion of Theorem \ref{thm:main} holds for all $p \in (p_{\alpha,\beta,R} , 1)$ with
\[
p_{\alpha,\beta,R} := 2 - \frac{n-1}{2} \frac{\alpha}{\beta} + R^2 \alpha . 
\]
It follows that if $\gamma = \frac{\beta}{\alpha}$ is too large, $p_{\alpha,\beta,R} > 1$ and no information is obtained. Moreover, when $\gamma=1$ (hence $\ell K$ is necessarily a Euclidean ball and $R^2 \alpha = 1$), we have $p_{\alpha,\beta,R} = 3 - \frac{n-1}{2}$, and so the sharp exponent $p=-n$ is not obtained. Both of these drawbacks are absent from the estimate $p_{\gamma} = 1 - \frac{n+1}{\gamma}$ obtained in Theorem \ref{thm:main}.

The proof of Theorem \ref{thm:main} involves several improvements over the argument of \cite{EMilman-IsomorphicLogMinkowski}. As in that work, a key role is played by the centro-affine differential calculus.

\medskip
\textbf{Acknowledgment.} We thank Yingxiang Hu and the referees for their helpful comments.

\section{Preliminaries} \label{sec:prelim}

We work in Euclidean space $\R^n$, which we also identify with its dual using the Euclidean metric $\delta = \scalar{\cdot\, ,\cdot}$. 
Let $D$ denote the flat connection on $\R^n$. 
Let  $(\S^{n-1},\bar{\delta},\bar{\nabla})$ denote the unit sphere equipped with its standard round metric and Levi-Civita connection.
We denote by $\Leb$ the spherical Lebesgue measure on $\S^{n-1}$.

\subsection{Convex Geometry}

Let $K \in \K^2_+$ be a $C^2$-smooth convex body with strictly positive curvature, containing the origin in its interior. Recall that $\II_{\partial K}$ denotes the second fundamental form of $\partial K$.
The Gauss map $\nu_K: \partial K \to \S^{n-1}$ sends $x \in\partial K$ to its unique outer unit normal vector.
The support function of $K$ is defined as $h_K(x) = \max_{y \in K} \scalar{x,y} : \R^n \rightarrow \R_+$, and we have $\scalar{\nu_K(x) , x} = h_K(\nu_K(x))$ for $x \in \partial K$. In addition, we have
\[
X_K(\theta) := \nu_K^{-1}(\theta) = D h_K(\theta)  : \S^{n-1} \rightarrow \partial K ,
\]
and therefore (invoking the $1$-homogeneity of $h_K$)
\begin{equation} \label{eq:II-D2hK}
(\nu_K)_* \mathrm{II}_{\partial K}  = \bar D^2 h_K := D^2 h_K|_{T \S^{n-1} \times T \S^{n-1}} = \bar \nabla^2 h_K + h_K \bar{\delta} .
\end{equation}

 The surface-area measure $S_K$ is the measure on $\S^{n-1}$ given by
 \[
 S_K := (\nu_K)_*(\H^{n-1}|_{\partial K}) = \frac{1}{\kappa_K} \Leb ,
\]
where $\kappa_K(\theta)$ denotes the Gauss curvature of $\partial K$ at $X_K(\theta)$:
\[
\frac{1}{\kappa_K(\theta)}=\frac{\det (\bar{\nabla}^2 h_K + h_K \bar{\delta})}{\det(\bar{\delta})}\Big|_\theta, \quad\quad \theta \in \S^{n-1}.
\]
More generally, the $L^p$ surface-area measure of $K$ is defined as
\[
 S_p K := h^{1-p}_K S_K . 
\]
The cone-volume measure $V_K$ on $\S^{n-1}$ is defined as
\begin{equation} \label{eq:VK}
V_K = \frac{1}{n} h_K S_K = \frac{1}{n} S_0 K ;
\end{equation}
it is obtained by first pushing forward the Lebesgue measure on $K$ via the composition of the cone-map $K \ni x \mapsto x / \norm{x}_K \in \partial K$ with the Gauss map $\nu_K : \partial K \rightarrow \S^{n-1}$.

The associated Minkowski gauge function  $\norm{\cdot}_K : \R^n \rightarrow \R_+$ is defined as the positively $1$-homogeneous function $\inf \{ t \geq 0 : x \in t K\}$, or equivalently, as the support function $h_{K^\ast}$ of the polar body defined by
\[
K^{\ast}:=\{x\in \R^n:  \scalar{x,y} \leq 1,\,\forall y\in K\}.
\]
Convexity of $K$ implies that $\norm{\cdot}_K$ and $h_K$ are convex; when $K$ is in addition origin-symmetric, these define norms on $\R^n$ whose unit-balls are $K$ and $K^{\ast}$, respectively. We refer to \cite{Schneider-Book-2ndEd} for additional background in Convex Geometry. 

\subsection{Anisotropic metric}

\begin{definition}[Anisotropic Riemannian metric]
The anisotropic Riemannian metric induced by $K$ on $\R^n \setminus \{0\}$ is defined as the positive semi-definite symmetric $2$-tensor given by
\begin{equation} \label{eq:PK}
	P_K(x) := D^2 \frac{\norm{x}_K^2}{2} , \quad x\in \R^n\setminus\{0\}.
\end{equation}
\end{definition}

The strictly positive curvature of $K$ implies that $P_K$ is in fact positive-definite. Since $x \mapsto P_K(x)$ is $0$-homogeneous, it is enough to verify this on $\partial K$. Along the way, we observe that the radial and tangential directions to $\partial K$ are $P_K$-orthogonal: 

\begin{lemma}\label{lem:PK}
Let $x\in \partial K$. Then $P_K(x)(x,x)=1$, and for all $u,v \in T_x \partial K$,
\begin{equation} \label{eq:barPK}
P_K(x)(x,u) = 0, \quad P_K(x)(u,v) = \frac{\mathrm{II}_{\partial K}(u,v)}{h_K \circ \nu_K(x)}  . 
\end{equation}
In particular, $\bar P_K := P_K|_{T \partial K \times T \partial K}$ is a metric on $\partial K$ and its push-forward onto $\S^{n-1}$ via $\nu_K$ is given by
\begin{equation} \label{eq:gK}
g_K := (\nu_K)_*(\bar P_K) = \frac{\bar D^2 h_K}{h_K} . \end{equation}
\end{lemma}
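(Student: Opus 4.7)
The plan is to exploit the $2$-homogeneity of $f(x) := \norm{x}_K^2/2$ together with the fact that $f \equiv 1/2$ on $\partial K$. First, Euler's identity applied to the $2$-homogeneous $f$ yields $Df(x) \cdot x = 2 f(x) = \norm{x}_K^2$; differentiating once more in $x$ gives $D^2 f(x) \cdot x = Df(x)$ as $1$-forms, i.e., $P_K(x)(x, v) = Df(x)(v)$ for every $v \in \R^n$. Setting $v = x$ with $x \in \partial K$ immediately yields $P_K(x)(x, x) = 2 f(x) = 1$.

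To identify $Df(x)$ at $x \in \partial K$, I would use that $\norm{\cdot}_K \equiv 1$ on $\partial K$, so $D\norm{\cdot}_K(x)$ is outer normal to $\partial K$ and hence a positive multiple of $\nu_K(x)$. The proportionality constant is determined by Euler's identity for the $1$-homogeneous $\norm{\cdot}_K$, namely $D\norm{\cdot}_K(x) \cdot x = \norm{x}_K = 1$, giving $D\norm{\cdot}_K(x) = \nu_K(x)/\scalar{\nu_K(x), x} = \nu_K(x)/h_K(\nu_K(x))$. Consequently $Df(x) = \norm{x}_K \, D\norm{\cdot}_K(x) = \nu_K(x)/h_K(\nu_K(x))$ on $\partial K$, and therefore $P_K(x)(x, u) = Df(x)(u) = 0$ for every $u \in T_x \partial K$.

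For the tangential-tangential block, I would pick a local parametrization $\phi : U \to \partial K$ near $x$ with $u = D\phi(w_1)$, $v = D\phi(w_2)$, and differentiate the identity $f \circ \phi \equiv 1/2$ twice to obtain
\[
P_K(x)(u, v) \;=\; -Df(x)\bigl(D^2 \phi(w_1, w_2)\bigr) \;=\; -\frac{\scalar{\nu_K(x),\, D^2 \phi(w_1, w_2)}}{h_K(\nu_K(x))} \;=\; \frac{\II_{\partial K}(u, v)}{h_K(\nu_K(x))},
\]
where the last step uses the standard identity $\scalar{\nu_K,\, D^2 \phi(w_1, w_2)} = -\II_{\partial K}(u, v)$ for the outer unit normal to a convex hypersurface (with the convention making $\II_{\partial K}$ positive semi-definite). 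Positive definiteness of $\bar P_K$ then follows from the strict positivity of $\II_{\partial K}$ for $K \in \K^2_+$, and extends to all of $\R^n \setminus \{0\}$ by the previously noted $0$-homogeneity of $P_K$.

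Finally, to derive $g_K = \bar D^2 h_K / h_K$, I would push $\bar P_K$ forward via $\nu_K$, using $X_K = Dh_K$ as its inverse. For $\xi_1, \xi_2 \in T_\theta \S^{n-1}$,
\[
g_K(\xi_1, \xi_2) \;=\; \bar P_K\bigl(DX_K \xi_1,\, DX_K \xi_2\bigr) \;=\; \frac{\II_{\partial K}(DX_K \xi_1,\, DX_K \xi_2)}{h_K(\theta)} \;=\; \frac{\bar D^2 h_K(\xi_1, \xi_2)}{h_K(\theta)},
\]
the last equality being precisely (\ref{eq:II-D2hK}). There is no serious obstacle here; the main point is to combine Euler's identity in two different homogeneities with the equation $f \circ \phi \equiv 1/2$, and the only care required is keeping sign and normalization conventions consistent throughout.
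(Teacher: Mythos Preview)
Your proof is correct and follows essentially the same approach as the paper: both use Euler's identity to get $P_K(x)(x,\cdot)=Df(x)$, identify $D\norm{x}_K=\nu_K(x)/h_K(\nu_K(x))$ on $\partial K$, and differentiate the constancy of $f$ along $\partial K$ to extract the tangential block. The only organizational difference is that the paper differentiates the relation $P_K(x)(x,\partial_i x)=0$ and then needs the auxiliary observation $D^3 f(x,u,v)=0$ to kill the extra term, whereas you differentiate $f\circ\phi\equiv 1/2$ directly and thereby avoid that detour; the final identity for $g_K$ is obtained identically via (\ref{eq:II-D2hK}).
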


\begin{proof}
Since $D \frac{\norm{\cdot}_K^2}{2}$ is homogeneous of degree one, we have by Euler's identity:
\begin{align} \label{eq:Px}
P_K(x)(x,\cdot)=  D \frac{\norm{x}^2_K}{2} .
\end{align}
Since $\partial K = \{x\in \R^n: \norm{x}_K = 1\}$, it follows that $P_K(x)(x,u) = 0$, and after another application of Euler's identity, that $P_K(x)(x,x)= \norm{x}_K^2 = 1$. In addition, we observe that for all $u,v \in T_x \partial K$:
\begin{equation} \label{eq:Q}
D^3 \frac{\norm{x}_K^2}{2} ( x, u , v) = 0 . 
\end{equation}

Since for $x \in \partial K$ we have $\scalar{D \norm{x}_K , x} = \norm{x}_K = 1$, $\nu_K(x)$ is parallel to $D \norm{x}_K$ and $\scalar{\nu_K(x),x} = h_K(\nu_K(x))$, we conclude that
\[
D \norm{x}_K=\frac{\nu_K(x)}{h_K(\nu_K(x))}\quad \forall x\in\partial K.
\]
Using local coordinates on $\partial K$, it follows from (\ref{eq:Px}) that
\begin{align}\label{eq:anisotropic-form}
	P_K(x)(x,\partial^2_{ij}x)=\frac{\sscalar{\nu_K(x), \partial_{ij}^2 x}}{h_K(\nu_K(x))}
=-\frac{\mathrm{II}_{\partial K}(\partial_i x, \partial_j x)}{h_K\circ \nu_K}\Big|_{x} .
\end{align}
Recalling that $P_K(x)(x,\partial_i x) = 0$, we apply $\partial_j$. Using (\ref{eq:Q}) and (\ref{eq:anisotropic-form}), it follows that
\[
P_K(x)(\partial_i x, \partial_j x) = \frac{\mathrm{II}_{\partial K}(\partial_i x, \partial_j x)}{h_K\circ \nu_K}\Big|_{x} .
\]
The strictly positive curvature of $K$ implies that $\bar P_K$ is a positive-definite metric on $\partial K$. The last assertion follows by (\ref{eq:II-D2hK}). 
\end{proof}

It may be elucidating to explicitly express $P_K$ as follows:
\begin{lemma} \label{lem:curvature-pinching}
Given $K \in \K^2_+$, let $\pi_{\partial K}(x) : T_x \R^n \rightarrow T_x \partial K$ denote the projection onto $T_x \partial K$ parallel to the radial direction $x$. Then for all $x \in \partial K$:
\begin{equation} \label{eq:detailed-pinching}
D^2 \frac{\norm{x}_{K}^2}{2} = \pi_{\partial K}(x)^T \frac{\II_{\partial K}(x)}{\scalar{x,\nu_K(x)}} \pi_{\partial K}(x) + (\textrm{Id} - \pi_{\partial K}(x))^T \frac{1}{|x|^2} (\textrm{Id} - \pi_{\partial K}(x)) . 
\end{equation}
\end{lemma}
\begin{proof}
Since the radial and tangential directions are $P_K$-orthogonal, namely $P_K(x)(x,u) = 0$ for all $u \in T_x \partial K$, it follows that:
\[
P_K(x) = \pi_{\partial K}(x)^T P_K(x) \pi_{\partial K}(x) +  (\textrm{Id} - \pi_{\partial K}(x))^T P_K(x) (\textrm{Id} - \pi_{\partial K}(x)) . 
\]
As $\textrm{Id} - \pi_{\partial K}(x)$ is the projection onto the radial direction spanned by $x$ (in parallel to $T_x \partial K$), and since $P_K(x)(t x/|x|, t x/|x|) = \frac{t^2}{|x|^2}$, we obtain the second summand in (\ref{eq:detailed-pinching}). The first summand is immediately deduced from (\ref{eq:barPK}), after recalling that $h_K(\nu_K(x)) = \scalar{x,\nu_K(x)}$. 
\end{proof}

Conjugating both sides of (\ref{eq:detailed-pinching}) with $\pi_{\partial K}(x)$ and $\textrm{Id} - \pi_{\partial K}(x)$, and noting that $\pi_{\partial K}(x) (\textrm{Id} - \pi_{\partial K}(x)) = (\textrm{Id} - \pi_{\partial K}(x))  \pi_{\partial K}(x)  = 0$, Lemma \ref{lem:intro} from the Introduction immediately follows. Recall that $x \mapsto P_K(x)$ is $0$-homogeneous, and so controlling $P_K$ on $\partial K$, $\S^{n-1}$ or $\R^n \setminus \{0\}$ are all equivalent.
We refer to \cite{Xia-AnisotropicMinkowski,WeiXiong-AnisotropicCurvatureFlow} for additional background on anisotropic geometry. 

\subsection{Centro-affine differential geometry}

Let $X: \S^{n-1}\to \M$ be a smooth embedding of $\S^{n-1}$ into $\R^n$ so that the centro-affine normal field $\xi(\theta)=X(\theta)$ is transversal to $\M$. 
In particular, this holds when $X = X_K = D h_K$ and $\M = \partial K$, where $K \in \K^\infty_+$ is a $C^\infty$-smooth convex body with strictly positive curvature, containing the origin in its interior. We identify between the tangent spaces $T_x \R^n$ and $\R^n$, but for the sake of clarity preserve in this preliminary subsection the distinction between the centro-affine normal vector $\xi$ and the embedding $X$. 

The centro-affine normal $\xi$ induces a volume form $V$, a connection $\nabla$, as well as a bilinear form $g^{\xi}$ on $\S^{n-1}$ as follows:
\begin{align*}
V(e_1,\ldots, e_{n-1}):= \frac{1}{n} \det (dX(e_1),\ldots, dX(e_{n-1}),\xi),\quad e_i\in T\S^{n-1},	
\end{align*}
\begin{align}\label{centro-affine}
D_udX(v)=dX(\nabla_u v)-g^{\xi}(u,v)\xi, 
\end{align}
for all smooth vector fields $u,v$ on $\S^{n-1}$. 
Moreover, $g^{\xi}$ is symmetric and positive-definite, $\nabla$ is torsion-free, 
and $\nabla V \equiv 0$. 
The conormal field $\xi^\ast: \S^{n-1}\to (\R^n)^{\ast} \simeq \R^n$ is defined as the unique smooth vector field on the dual space of $\R^n$ such that $\scalar{ \xi^\ast,dX}=0$ and $\scalar{ \xi, \xi^\ast}=1$. 
Since $\xi^\ast$ is an immersion and transversal to its image, it also induces a symmetric bilinear form $g^{\xi^{\ast}}$ and a torsion-free connection $\nabla^{\ast}$ on $\S^{n-1}$ via
\[
D_ud\xi^\ast(v)=d\xi^{\ast}(\nabla_u^{\ast}v)-g^{\xi^\ast}(u,v)\xi^\ast.
\]
It can be shown that
\[
g(u,v) := g^{\xi}(u,v) = \scalar{ d\xi^\ast(u), d\xi(v)} =g^{\xi^{\ast}}(u,v).
\]
The connection $\nabla^{\ast}$ is the conjugate connection to $\nabla$ with respect to $g$; that is, for vectors fields $u,v_1,v_2$, we have
\[ u (g(v_1,v_2))=g(\nabla_uv_1,v_2)+g(v_1,\nabla^{\ast}_uv_2).
\] Equivalently, if $\omega$ is a $1$-form, this means in a local frame that
\begin{equation} \label{eq:conjugate}
\nabla_i (g^{jk} \omega_k) = g^{jk} \nabla^*_i \omega_k .
\end{equation}
We furnish all geometric quantities associated with $\xi^{\ast}$ with $\ast.$ The triplet $(X,\xi,\xi^{\ast})$ is called a centro-affine hypersurface, and the associated geometry is invariant under the centro-affine group $\GL(n,\mathbb{R})$. We refer to \cite{NomizuSasaki-Book,LSZH-Book,EMilman-IsomorphicLogMinkowski} for additional information on centro-affine differential geometry.

\begin{lemma}\label{induced centro-affine geometry}
Let $X: \S^{n-1}\to \partial K$ be given by $X=X_K = D h_K$. Then $\xi^{\ast} : \S^{n-1}  \rightarrow \partial K^{\ast}$ is given by $\xi^\ast(\theta)= \frac{\theta}{h_K(\theta)}$ and
\[
	g = g_K,\quad V  = V_K ,
\]
where recall $g_K$ and $V_K$ were defined in (\ref{eq:gK}) and (\ref{eq:VK}), respectively.
\end{lemma}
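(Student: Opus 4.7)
The plan is to verify the three claims ($\xi^*$, $g$, $V$) directly from their definitions, exploiting the 1-homogeneity of $h_K$ throughout.

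First I would identify $\xi^*$. Since $X_K = \nu_K^{-1}$, the tangent space $T_{X_K(\theta)} \partial K$ is spanned by the image of $dX_K$ and has outer normal $\nu_K(X_K(\theta)) = \theta$. The condition $\scalar{\xi^*, dX} = 0$ therefore forces $\xi^*(\theta) = c(\theta)\theta$ for some scalar $c$. Euler's identity applied to the $1$-homogeneous $h_K$ yields $\scalar{Dh_K(\theta),\theta} = h_K(\theta)$, so $\scalar{\xi,\xi^*} = c(\theta) h_K(\theta) = 1$ gives $\xi^* = \theta / h_K(\theta)$.

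Next I would compute $g(u,v) = \scalar{d\xi^*(u), d\xi(v)}$ for $u,v \in T_\theta \S^{n-1}$. Since $\xi = Dh_K$, we have $d\xi(v) = D^2 h_K(\theta) \cdot v$. Differentiating the quotient $\xi^*(\theta) = \theta / h_K(\theta)$ yields
\[
d\xi^*(u) = \frac{u}{h_K(\theta)} - \frac{\scalar{Dh_K(\theta),u}}{h_K(\theta)^2}\,\theta .
\]
The crucial simplification is that $Dh_K$ is $0$-homogeneous, so $D^2 h_K(\theta) \cdot \theta = 0$; by symmetry this kills the second term in the inner product. What remains is $g(u,v) = D^2 h_K(\theta)(u,v)/h_K(\theta)$, and restricting to $T_\theta \S^{n-1}$ and invoking \eqref{eq:II-D2hK} identifies the numerator as $(\bar\nabla^2 h_K + h_K \bar\delta)(u,v)$. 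Comparison with \eqref{eq:gK} gives $g = g_K$.

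Finally, for the volume form, I would plug $\xi = X_K(\theta)$ into the defining determinant. The key observation is that $X_K(\theta) = h_K(\theta)\,\theta + w$ where $w = X_K(\theta) - h_K(\theta)\theta$ is tangent to $\partial K$ at $X_K(\theta)$, hence lies in the span of $\{dX_K(e_i)\}$; such a tangential contribution is killed by the determinant. Thus
\[
V(e_1,\dots,e_{n-1}) = \frac{h_K(\theta)}{n} \det\bigl(dX_K(e_1), \dots, dX_K(e_{n-1}), \theta\bigr),
\]
and for an orthonormal frame $\{e_i\}$ on $\S^{n-1}$ the determinant on the right is precisely the Jacobian of the inverse Gauss map $X_K = \nu_K^{-1}$, i.e.\ $1/\kappa_K(\theta)$. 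Therefore $V = \frac{h_K}{n\kappa_K}\Leb = \frac{1}{n} h_K S_K = V_K$. I do not anticipate a substantive obstacle; the only step requiring genuine care is justifying why the cross term in $d\xi^*$ and the tangential part of $X_K(\theta)$ drop out, both of which follow cleanly from the homogeneity of $h_K$.
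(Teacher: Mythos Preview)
Your argument is correct. The paper's own proof is essentially a pointer to \cite[Proposition~4.2]{EMilman-IsomorphicLogMinkowski} together with the hint ``take the $\delta$-inner product of \eqref{centro-affine} with $\nu_K$'': pairing $D_u dX(v)=dX(\nabla_u v)-g(u,v)\xi$ with $\theta=\nu_K$ annihilates the tangential term and gives $g(u,v)h_K(\theta)=-\sscalar{D_u dX(v),\theta}=\II_{\partial K}(dX(u),dX(v))=\bar D^2 h_K(u,v)$ directly, without first computing $\xi^*$. You instead identify $\xi^*$ first and then recover $g$ from the mixed formula $g=\sscalar{d\xi^*,d\xi}$, exploiting $D^2 h_K\cdot\theta=0$ to kill the cross term. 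Both routes are one-line computations from homogeneity; yours has the mild advantage of being fully self-contained (you actually verify $\xi^*$ and $V$ rather than citing), while the paper's hint reaches $g$ without the detour through the conormal. No gaps.
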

\begin{proof}
The claims follow from \eqref{centro-affine} and taking the $\delta$ inner product of both sides of \eqref{centro-affine} with $\nu_K$; see \cite[Proposition 4.2]{EMilman-IsomorphicLogMinkowski}.
\end{proof}

\begin{remark}\label{key rem}
The metric $g_K$ constructed in Lemma \ref{lem:PK} from the anisotropic Riemannian metric $P_K$ is thus seen to be the centro-affine metric induced by $\partial K$, a fact which we shall freely use. It turns out that both primal and dual structures  $(\nabla_K,g_K)$ and $(\nabla^{\ast}_K,g_K)$ have constant sectional and Ricci curvatures, turning any $(\S^{n-1},\nabla_K,g_K)$ into a centro-affine unit-sphere; cf. \cite{LSZH-Book,EMilman-IsomorphicLogMinkowski}. 
\end{remark}

We henceforth fix $K$ and use $\nabla_K$ and $\nabla_K^{\ast}$ to denote the induced primal and conjugate centro-affine connections. We abbreviate $\grad=\grad_{g_K}$, and use $|\cdot|_{g_K}$ to denote the norm with respect to the centro-affine metric $g_K$.

\subsection{Centro-affine differential calculus}

For a function $f \in C^2(\S^{n-1})$, the Hessian with respect to the conjugate centro-affine structure $(g_K,\nabla^{\ast}_K)$ is the following symmetric $2$-tensor:
\begin{align}\label{hess ast def}
\operatorname{Hess}^{\ast}_K f(u,v) = \nabla^{\ast}_K df(u,v)=v(uf)-df( (\nabla_K^{\ast})_v u) .
\end{align}
The centro-affine Laplacian with respect to $(g_K,\nabla_K)$ (also called the Hilbert--Brunn--Minkowski operator) is defined as (recall (\ref{eq:conjugate}))
\begin{equation} \label{eq:Laplacian}
\Delta_K f = \nabla_K \cdot \grad_{g_K} f = \operatorname{tr}_{g_K}\operatorname{Hess}^{\ast}_K f .
\end{equation}
In local coordinates \cite[Section 2.3]{EMilman-IsomorphicLogMinkowski}: \[
\Delta_K f=((\bar D^2 h_K)^{-1})^{ij}[\bar{\nabla}_{i,j}^2(f h_K)+(f h_K)\bar{\delta}_{ij}]- (n-1) f.
\]
Integrating by parts using $\nabla_K V_K \equiv 0$, one has for all $f_1,f_2 \in C^2(\S^{n-1})$:
\[
-\int f_1 \Delta_K f_2 \, dV_K = \int \scalar{\grad f_1 , \grad f_2}_{g_K} dV_K = - \int (\Delta_K f_1) f_2 \, dV_K .
\]
Consequently, $-\Delta_K$ is self-adjoint (with dense domain $H^2(\S^{n-1})$) and positive semi-definite on $L^2(V_K)$. Its spectrum $\sigma(-\Delta_K)$ is discrete, consisting of a countable sequence of eigenvalues of finite multiplicity starting at $0$ (corresponding to constant functions) and increasing to $\infty$. The spectrum is invariant under centro-affine transformations: $\sigma(-\Delta_{\ell K}) = \sigma(-\Delta_{K})$ for all $\ell \in \GL(n,\R)$. We refer to \cite[Section 5]{KolesnikovEMilman-LocalLpBM} and \cite[Sections 3,4]{EMilman-IsomorphicLogMinkowski} for further details.

Using that the centro-affine Ricci curvature is constant $(n-2) g_K$, the following centro-affine Bochner formula was established in 
\cite[Theorem 5.2]{EMilman-IsomorphicLogMinkowski}:
\begin{align} \label{eq:Bochner}
\int (\Delta_K f)^2 dV_K = \int \brac{|\operatorname{Hess}^{\ast}_K f|_{g_K}^2 + (n-2)|\grad f|_{g_K}^2} dV_K .
\end{align}

\subsection{Local Brunn-Minkowski inequality}

The local Brunn--Minkowski inequality, a Poincar\'e-type inequality going back to Hilbert following Minkowski \cite[pp. 108--109]{BonnesenFenchelBook} and rediscovered by Colesanti \cite{ColesantiPoincareInequality}, is an infinitesimal form of the classical Brunn-Minkowski inequality \cite{Schneider-Book-2ndEd} -- see \cite[Section 5]{KolesnikovEMilman-LocalLpBM} for an exposition. In the centro-affine language, it states that
\begin{align}
\label{L1BM} \int f dV_K = 0 \; \Rightarrow \; \int (-\Delta_K f) f dV_K = \int |\grad f|_{g_K}^2 dV_K \geq (n-1) \int f^2 dV_K ,
\end{align}
with equality if and only if for some $v \in \R^n$ we have \begin{equation} \label{eq:eigenspace}
f(\theta)=\scalar{\theta/ h_K(\theta),v} \quad \forall \theta \in \S^{n-1}
\end{equation}
(see \cite[Theorems 5.3 and 5.4]{EMilman-IsomorphicLogMinkowski} for a simple proof using (\ref{eq:Bochner})). Defining $\lambda_1(-\Delta_K)$ to be the first non-zero eigenvalue of $-\Delta_K$, it follows that $\lambda_1(-\Delta_K) = n-1$ (with multiplicity $n$).

When $K$ is in addition assumed to be origin-symmetric, denote by $C^2_e(\S^{n-1})$ and  $H^2_e(\S^{n-1})$ the even elements of $C^2(\S^{n-1})$ and $H^2(\S^{n-1})$, respectively, and by $\mathbf{1}^{\perp}$ those elements $f \in H^2(\S^{n-1})$ for which $\int f  \, dV_K = 0$. The first non-zero \emph{even} eigenvalue of $-\Delta_K$ is defined as
\begin{align*}
 \lambda_{1,e}(-\Delta_K) & := \min \sigma(-\Delta_K|_{H^2_e(\S^{n-1}) \cap \mathbf{1}^{\perp}}) \\
 & = \inf \set{ \frac{\int |\grad f|_{g_K}^2 dV_K}{\int f^2 dV_K} \; : \;  0 \neq f \in C^2_{e}(\S^{n-1}) , \int_{\S^{n-1}} f \, dV_K = 0 } .
\end{align*}
Since the eigenspace (\ref{eq:eigenspace}) corresponding to $\lambda_1(-\Delta_K) = n-1$ consists of odd functions, it follows that $\lambda_{1,e}(-\Delta_K) >  n-1$. 

\subsection{Local-to-global principle}

The following local-to-global principle was explicitly formulated in \cite{EMilman-IsomorphicLogMinkowski}, after combining several key ingredients from \cite{KolesnikovEMilman-LocalLpBM,ChenEtAl-LocalToGlobalForLogBM, BCD-PowerOfGaussCurvatureFlow}; in fact, the formulation below is slightly stronger than the one appearing in \cite[Theorem 2.1]{EMilman-IsomorphicLogMinkowski}, but readily follows from the proof given there. 
The $C^{2,\a}$-metric on $\K^{2,\a}_{+,e}$ is given by the $C^{2,\a}$-norm of the difference of support functions on $\S^{n-1}$. 

\begin{theorem} \label{thm:l2g}
Let $\F \subset \K^{2,\a}_{+,e}$ be any subfamily containing $B_2^n$ which is path-connected in the $C^{2,\a}$ topology. Namely, for any $K \in \F$, there exists $[0,1] \ni t \mapsto K_{t} \in \F$ a continuous path in the $C^{2,\a}$ topology so that $K_0 = B_2^n$ and $K_1 = K$. Then given $p \in (-n,1)$, each of the following statements implies the next:
\begin{enumerate}
\item 
For all $K \in \F$, $\lambda_{1,e}(-\Delta_K) > n-p$ (with strict inequality!). 
\item 
For all $K \in \F$, uniqueness holds in the even $L^p$-Minkowski problem (\ref{eq:main-Lp-Minkowski-Uniqueness}).
\item 
For all $K \in \F$, the even $L^p$-Minkowski inequality (\ref{eq:main-Lp-Minkowski-Inq}) holds (with the case $p=0$ interpreted in the limiting sense of (\ref{eq:main-log-Minkowski-Inq})),
with equality if and only if $L = c K$ for some $c > 0$.
\end{enumerate}
\end{theorem}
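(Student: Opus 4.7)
The plan is: prove (1) $\Rightarrow$ (2) by a connectedness/continuity argument along the path $\{K_t\}$, using the Brendle--Choi--Daskalopoulos theorem as base case and the spectral gap to drive openness; then prove (2) $\Rightarrow$ (3) by a direct variational argument in which the Euler--Lagrange equation for a volume-constrained extremizer is identified as $S_p L = \mu S_p K$, and uniqueness from (2) forces the extremizer to be $K$ itself.

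For (1) $\Rightarrow$ (2), fix $K = K_1 \in \F$ and define $T \subset [0,1]$ to be the set of $t$ for which even $L^p$-Minkowski uniqueness holds for $K_t$. One has $0 \in T$ by BCD \cite{BCD-PowerOfGaussCurvatureFlow}. To show $T$ is open at $t_0 \in T$: writing an even perturbation of $h_{K_{t_0}}$ as $\phi = f h_{K_{t_0}}$ and using the local coordinate formula for $\Delta_K$ recorded in the excerpt, the standard Monge--Amp\`ere linearization gives that the linearization of $L \mapsto S_p L$ at $K_{t_0}$ equals $\phi \mapsto S_p K_{t_0} \cdot (\Delta_{K_{t_0}} f + (n-p) f)$. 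Under (1), $(n-p)$ lies strictly between the two smallest even eigenvalues of $-\Delta_{K_{t_0}}$ (namely $0$ and $\lambda_{1,e}(-\Delta_{K_{t_0}})$), so this operator has trivial kernel on $C^{2,\alpha}_e(\S^{n-1})$. The inverse function theorem then yields local (in $C^{2,\alpha}$) uniqueness, and an a priori $C^{2,\alpha}$-estimate for even solutions of the $L^p$-Minkowski problem confines any competitor $L \in \K_e$ with $S_p L = S_p K_t$ to this neighborhood when $t$ is close to $t_0$. Closedness follows by an analogous compactness-contradiction argument: any hypothetical non-uniqueness at $t_\infty = \lim t_n$ would, via Blaschke selection and $C^{2,\alpha}$-compactness, yield a non-uniqueness at some $t_n \in T$, contradiction. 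Hence $T = [0,1]$.

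For (2) $\Rightarrow$ (3), fix $K \in \F$ and $p \in (-n,1) \setminus \{0\}$. Consider minimizing (if $p > 0$) or maximizing (if $p < 0$) the functional $\Psi(L) := \int_{\S^{n-1}} h_L^p \, dS_p K$ over $L \in \K_e$ subject to $V(L) = V(K)$. Blaschke selection yields an extremizer $L^*$, whose support function is uniformly bounded above and below thanks to the origin-symmetry and the volume constraint. The Euler--Lagrange equation (using $dS_p K = h_K^{1-p} dS_K$, $dV(L) = \tfrac{1}{n} h_L dS_L$, and a Lagrange multiplier) simplifies after a short calculation to $S_p L^* = \mu \, S_p K$ for some $\mu > 0$. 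Setting $c := \mu^{-1/(n-p)}$ and exploiting the homogeneity $S_p(cL^*) = c^{n-p} S_p L^*$, we get $S_p(cL^*) = S_p K$; by (2) applied to $K$ (with $cL^* \in \K_e$), this forces $cL^* = K$, i.e., $L^* = c^{-1}K$, and the volume constraint pins $c = 1$, so $L^* = K$. Substituting back gives $\Psi(K) = \int h_K \, dS_K = nV(K) = nV(K)^{1-p/n}V(K)^{p/n}$, which (after unconstraining $L$ by homogeneity) rearranges to the claimed $L^p$-Minkowski inequality. Equality forces $L$ to be an extremizer up to scaling, hence $L = cK$. The case $p=0$ is obtained by $p \to 0$: since (2) holds for $p$ in an open interval around $0$ along $\F$, both sides of the inequality are continuous in $p$, and the identity $\lim_{p \to 0} \tfrac{1}{p}\bigl[\int h_L^p dS_p K - nV(K)^{1-p/n}V(L)^{p/n}\bigr] = n \int \log(h_L/h_K)\, dV_K - V(K)\log(V(L)/V(K))$ converts the $L^p$ inequalities into the log-Minkowski inequality with the correct equality case.

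The principal obstacle lies in the closedness/a priori estimates underpinning (1) $\Rightarrow$ (2): one must prevent a competitor $L \in \K_e$ with $S_p L = S_p K_t$ from escaping every $C^{2,\alpha}$-neighborhood of $K_t$. This is where one imports the Monge--Amp\`ere regularity theory and maximum-principle machinery developed in \cite{BCD-PowerOfGaussCurvatureFlow, ChenEtAl-LocalToGlobalForLogBM}; these supply uniform $C^{2,\alpha}$-bounds on even solutions that render both openness and closedness operational. Regularity of the variational extremizer in (2) $\Rightarrow$ (3) is a secondary technical point, resolved by standard Chou--Wang-type theory \cite{ChouWang-LpMinkowski} once one verifies that $L^*$ has full-dimensional support (guaranteed here by the volume constraint and origin-symmetry).
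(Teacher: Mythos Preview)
First, note that the paper does not supply its own proof of this theorem: it is quoted from \cite[Theorem~2.1]{EMilman-IsomorphicLogMinkowski}, with the remark that the present formulation ``readily follows from the proof given there'' (itself a synthesis of \cite{KolesnikovEMilman-LocalLpBM, ChenEtAl-LocalToGlobalForLogBM, BCD-PowerOfGaussCurvatureFlow}). So any comparison is necessarily against those references. Your variational argument for (2)~$\Rightarrow$~(3) is essentially the standard one and is correct modulo the regularity points you already flag; the Euler--Lagrange identification $S_p L^\ast = \mu\, S_p K$ is right, and for $p \in (-n,0)$ the extremizer does exist because $\int h_L^p\, dS_p K \to 0$ (not $\infty$) as $L$ degenerates under the volume constraint.

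The genuine gap is in your closedness step for (1)~$\Rightarrow$~(2). Your openness step is fine once unpacked: uniqueness at $t_0$ plus a~priori $C^{2,\alpha}$-compactness forces any competitor $L_n$ at $t_n \to t_0$ to converge to $K_{t_0}$, and then IFT \emph{at $K_{t_0}$} (valid by (1), since $K_{t_0} \in \F$) pins $L_n = K_{t_n}$. But your closedness claim that ``non-uniqueness at $t_\infty = \lim t_n$ would \ldots\ yield non-uniqueness at some $t_n \in T$'' requires continuing the extra solution $L \neq K_{t_\infty}$ to a nearby branch $L_t$ with $S_p L_t = S_p K_t$---an IFT step \emph{at $L$}. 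Hypothesis (1) only guarantees $\lambda_{1,e}(-\Delta_M) > n-p$ for $M \in \F$; the competitor $L$ has no reason to lie in $\F$, so $\Delta_L + (n-p)$ may well be singular and no branch through $L$ need exist. A~priori $C^{2,\alpha}$-bounds give compactness, not this. In \cite{ChenEtAl-LocalToGlobalForLogBM} the issue does not arise because the spectral bound is assumed on all of $\K^{2,\alpha}_{+,e}$; the extension to a general path-connected $\F$ in \cite{EMilman-IsomorphicLogMinkowski} routes the local-to-global argument through the $L^p$-Minkowski \emph{inequality} (equivalently, the second-variation formulation of (1)), so that every linearization/IFT step is taken at some $K_t \in \F$ rather than at an uncontrolled competitor. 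Your sketch would be repaired by reorganizing along those lines.
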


Given $\gamma \geq 1$, let $\F_\gamma$ denote the collection of convex bodies $K  \in \K^{2,\a}_{+,e}$ so that there exists $\ell \in \GL(n,\R)$ and $0 < \alpha \leq \beta$ with $\beta/\alpha = \gamma$ so that the curvature-pinching bounds (\ref{eq:intro-pinching}) hold. Clearly $B_2^n \in \F_\gamma$. In view of Theorem \ref{thm:l2g}, in order to establish Theorem \ref{thm:main}, it is enough to establish that $\F_\gamma$ is path-connected in the $C^{2,\a}$ topology and to show that the following local spectral estimate holds:
\begin{equation} \label{eq:local}
\gamma > 1 \;\; \Rightarrow \;\; \forall K \in \F_\gamma , \;\;\; \lambda_{1,e}(-\Delta_K) > n - p_\gamma  .
\end{equation} 
The heart of the proof will be to establish (\ref{eq:local}) in the next section. As for the $C^{2,\a}$-path-connectedness of $F_\gamma$, this follows immediately by linearly interpolating $\norm{\cdot}_{K}^2$ and $\norm{\cdot}_{B_2^n}^2$ (after assuming without loss of generality that $\ell = \textrm{Id}$ and $\alpha \leq 1 \leq \beta$; see the proof of \cite[Theorem 6.4]{EMilman-IsomorphicLogMinkowski}).

\section{Local spectral estimate}

In view of the previous comments, in order to conclude the proof of Theorem \ref{thm:main}, it remains to establish the local spectral estimate (\ref{eq:local}). 

\begin{theorem} \label{thm:spectral}
With the same assumptions and notation as in Theorem \ref{thm:main},
\[
\gamma > 1 \;\; \Rightarrow \;\; \lambda_{1,e}(-\Delta_K) > n - p_\gamma .
\]
Moreover, if (\ref{eq:intro-pinching}) holds with $\alpha=\beta$ so that $\gamma=1$, then $K$ is a centered ellipsoid and equality holds: 
\[
\gamma =1 \;\; \Rightarrow \;\; \lambda_{1,e}(-\Delta_K) = n - p_1 = 2n . 
\]
\end{theorem}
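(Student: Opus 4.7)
The plan is to use the centro-affine Bochner identity (\ref{eq:Bochner}) together with the pinching hypothesis to force the strict spectral gap $\lambda_{1,e}(-\Delta_K) > (n-1) + (n+1)/\gamma$. First, by the centro-affine invariance of the spectrum ($\sigma(-\Delta_{\ell K})=\sigma(-\Delta_K)$) and of the assumption (after absorbing $\ell$ into $K$), I would reduce to the case $\ell = \mathrm{Id}$, so $\alpha \delta \leq P_K \leq \beta \delta$. Combining Lemma \ref{lem:PK} with (\ref{eq:gK}), and using the bound $1/\sqrt{\beta}\leq h_K\leq 1/\sqrt{\alpha}$ from Lemma \ref{lem:intro}, the pinching translates into the dimension-free bound $\bar\delta/\beta \leq \bar D^2 h_K \leq \bar\delta/\alpha$ on $T\S^{n-1}$, whose extremal-eigenvalue ratio is \emph{exactly} $\gamma$ (better than the naive ratio $\gamma^{3/2}$ for $g_K$ vs.\ $\bar\delta$). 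The degenerate case $\gamma=1$ is then immediate: Lemma \ref{lem:intro} forces $K$ to be a centered ball, so $g_K = \bar\delta$, $\Delta_K = \bar\Delta$, and standard spherical harmonic analysis gives $\lambda_{1,e}(-\bar\Delta) = 2n = n-p_1$.

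For $\gamma > 1$, let $f \in C^2_e(\S^{n-1}) \cap \mathbf{1}^{\perp}$ be a first even eigenfunction with $-\Delta_K f = \lambda f$, $\lambda = \lambda_{1,e}(-\Delta_K)$. By (\ref{L1BM}) and the odd-parity description (\ref{eq:eigenspace}) of the first eigenspace, $\lambda > n-1$. Rearranging (\ref{eq:Bochner}) and using $\int|\grad f|^2_{g_K}\,dV_K = \lambda \int f^2\,dV_K$ gives
\[
\int |\mathrm{Hess}^{\ast}_K f|^2_{g_K}\, dV_K = \lambda(\lambda - (n-2))\int f^2 \,dV_K .
\]
The crucial pointwise identity I would verify, directly from the definition of $\nabla^{\ast}_K$, is
\[
h_K\, \mathrm{Hess}^{\ast}_K f = \bar D^2(f h_K) - f\, \bar D^2 h_K ,
\]
whose consistency is easily checked by tracing against $g_K^{-1} = h_K(\bar D^2 h_K)^{-1}$, recovering the coordinate formula for $\Delta_K$ displayed after (\ref{eq:Laplacian}). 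This identity expresses $\mathrm{Hess}^{\ast}_K f$ purely in terms of the Euclidean/spherical Hessian of $fh_K$ and the auxiliary tensor $A := \bar D^2 h_K$, which is sharply $\gamma$-comparable to $\bar\delta$.

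The main step, and the one I expect to be the principal obstacle, is to combine these ingredients into a spectral lower bound yielding \emph{exactly} the factor $(n+1)/\gamma$. Using the pointwise identity, one computes
\[
|\mathrm{Hess}^{\ast}_K f|^2_{g_K} = |M - f I|^2_F, \qquad M := A^{-1/2}\bar D^2(f h_K) A^{-1/2} ,
\]
with Frobenius norm relative to the Euclidean structure. The strategy is to decompose $M - fI$ into its trace and trace-free parts and estimate each against its round-sphere counterpart. The evenness of $fh_K$ (inherited from that of $f$ and $h_K$) unlocks the sphere's \emph{even} spectral gap $\lambda_{1,e}(-\bar\Delta)-\lambda_1(-\bar\Delta) = 2n - (n-1) = n+1$, while the $\gamma$-pinching of $A$ transfers the gain across the change of metric with a single factor of $\gamma$. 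Achieving the \emph{linear} dependence $\gamma^{-1}$ -- as opposed to the naive $\gamma^{-3/2}$ arising from comparing $g_K$ directly to $\bar\delta$ -- is the delicate point; it is made possible precisely by working with $A = h_K g_K$, whose comparison to $\bar\delta$ has sharp ratio $\gamma$. Strictness of the final bound then follows from $\gamma > 1$, since equality would force $\bar D^2 h_K \equiv c\,\bar\delta$, i.e., $K$ a centered ellipsoid (the already-settled limiting case).
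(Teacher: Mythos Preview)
Your proposal correctly identifies the reduction to $\ell=\mathrm{Id}$, the $\gamma=1$ case, and the pointwise identity $h_K\,\mathrm{Hess}^{\ast}_K f = \bar D^2(fh_K) - f\,\bar D^2 h_K$ (which is valid and does trace to the displayed formula for $\Delta_K$). However, the decisive step --- extracting the sharp factor $(n+1)/\gamma$ from the round sphere's even spectral gap --- is not carried out, and there are concrete obstructions to it as stated.

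First, the asserted bound $\bar\delta/\beta \leq \bar D^2 h_K \leq \bar\delta/\alpha$ is not what the hypothesis gives. Pushing $\alpha\,\delta_{\partial K} \leq \bar P_K \leq \beta\,\delta_{\partial K}$ forward by $\nu_K$ yields $\alpha B^2 \leq B/h_K \leq \beta B^2$ (with $B$ the matrix of $\bar D^2 h_K$ in a $\bar\delta$-orthonormal frame), hence the eigenvalues of $B$ lie in $[1/(\beta h_K),\,1/(\alpha h_K)]$. The \emph{pointwise} eigenvalue ratio of $B$ is at most $\gamma$, but the uniform two-sided bound you write holds only when $h_K\equiv 1$; globally, the eigenvalues range over $[\sqrt\alpha/\beta,\sqrt\beta/\alpha]$, with ratio $\gamma^{3/2}$. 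Second, and more seriously, even granting a pointwise $\gamma$-pinching of $A$, your plan to ``transfer'' the round gap $2n-(n-1)=n+1$ across the change of metric faces two mismatches you do not address: the Bochner identity is against $dV_K$, whose density relative to $\Leb$ is $h_K\det(\bar D^2 h_K)/n$ and itself varies by a factor up to $\gamma^{n/2}$; and $fh_K$ is not an eigenfunction of $\bar\Delta$, so there is no spectral inequality tying $\int|\bar D^2(fh_K)|^2$ or its trace-free part to $\int (fh_K)^2$ with constant $n+1$. You yourself flag this as ``the principal obstacle''; as written, the argument stops before it is surmounted.

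The paper's proof sidesteps both mismatches by never comparing $g_K$ (or $\bar D^2 h_K$) to $\bar\delta$ on $\S^{n-1}$ and never invoking $\bar\Delta$. Instead it builds from the even $f$ an \emph{odd} $\R^n$-valued field
\[
F = dX_K(\grad_{g_K} f) + \tau f\, X_K
\]
(the anisotropic gradient on $\partial K$ of the $\tau$-homogeneous extension of $f$), applies the local Brunn--Minkowski inequality (\ref{L1BM}) to each Euclidean component $F_k=\langle F,E_k\rangle$, and uses the pinching $\alpha\delta\leq P_K\leq\beta\delta$ \emph{directly in $\R^n$} to compare $|F|_\delta^2$ with $|F|_{p_K}^2=|\grad f|_{g_K}^2+\tau^2 f^2$ and, via (\ref{centro-affine}), $\sum_k|\grad F_k|_{g_K}^2$ with $|\mathrm{Hess}^{\ast}_K f+\tau f g_K|_{g_K}^2+(\tau-1)^2|\grad f|_{g_K}^2$. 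Combined with (\ref{eq:Bochner}) this yields a quadratic constraint $P_{\sigma,\tau}(\lambda_{1,e}(-\Delta_K))\geq 0$ with $\sigma=1/\gamma$; the choice $\tau=\sigma+1$ then gives $\lambda_{1,e}>n-1+(n+1)\sigma=n-p_\gamma$. The key device your outline is missing is precisely this passage from the even $f$ to the odd vector $F$, which keeps every inequality inside the $(\Delta_K,V_K,P_K)$ framework where the pinching applies with a single factor of $\gamma$ and no measure change.
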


\begin{remark} \label{rem:upper-bound}
It was shown in \cite[Theorem 1.1]{EMilman-Isospectral-HBM} that for all $K \in \K^2_{+,e}$ we have  $\lambda_{1,e}(-\Delta_K) \leq 2n$, with equality if and only if $K$ is a centered ellipsoid.
\end{remark}

\begin{proof}[Proof of Theorem \ref{thm:spectral}]
First, note that it is enough by approximation to establish the spectral estimate for $K \in \F_{\gamma} \cap \K^{\infty}_{+,e}$, since it is known \cite[Theorem 5.3]{KolesnikovEMilman-LocalLpBM} that the eigenvalues of $-\Delta_{K}$ are continuous in $K$ with respect to the $C^2$ (and in particular, $C^{2,\a}$) topology, the curvature pinching bounds $\alpha,\beta$ also vary continuously in the latter topology, and $p_\gamma$ is continuous in $\alpha,\beta$. Furthermore, since the spectrum of $-\Delta_K$ is centro-affine invariant, we may assume without loss of generality that $\ell = \mathrm{Id}$.

Consequently, let $K$ be an origin-symmetric smooth convex body in $\R^n$ with strictly positive curvature so that $\alpha \delta \leq P_K(x) \leq \beta \delta$ for all $x \in \R^n \setminus \{0\}$, where, recall, $P_K$ is the anisotropic Riemannian metric defined in (\ref{eq:PK}). Further recall that $X_K := D h_K : \S^{n-1} \rightarrow \partial K$ is the inverse Gauss map, which also plays the role of the centro-affine normal after identifying between $\R^n$ and $T_{X_K(\theta)} \R^n$. For any $\theta \in \S^{n-1}$, $p_K(\theta):=P_K(X_K(\theta))$ defines a Euclidean norm on $T_{X_K(\theta)} \R^n$, and for all $\theta \in \S^{n-1}$ and $v \in T_{X_K(\theta)} \R^n$: 
\begin{equation} \label{eq:ab}
\alpha|v|_{\delta}^2\leq |v|_{p_K(\theta)}^2\leq \beta|v|_{\delta}^2.
\end{equation}

Let $f \in C^2_e(\S^{n-1})$. Given a constant $\tau \in \R$ to be determined later, define
\begin{align*}
F&=dX_K(\grad_{g_K} f)+\tau f X_K : \S^{n-1} \rightarrow T_{\partial K} \R^n ,
\end{align*}
where we use $T_{\partial K} \R^n$ to denote the subbundle of $T \R^n$ over $\partial K$ (so for each $\theta \in \S^{n-1}$, $F(\theta) \in T_{X_K(\theta)} \R^n$). Note that the two summands in the definition of $F$ above are $p_K$-orthogonal by Lemma \ref{lem:PK}. As usual, it is convenient to identify each $T_{X_K(\theta)} \R^n$ with $\R^n$ and treat $F$ as a map from $\S^{n-1}$ to $\R^n$. 

Since $K$ is origin-symmetric and $f$ is even, $F$ is odd. Fixing an orthonormal basis $\{E_k\}_{i=1}^{n}$ of $(\R^n,\delta)$, now set
\[
\quad F_k := \scalar{ F,E_k} , \quad k=1,\ldots, n .
\]
Since $F_k$ is odd, $\int F_k dV_K = 0$; hence, we may apply the local Brunn-Minkowski inequality \eqref{L1BM} to each $F_k$ and sum over $k$, yielding
\begin{align}\label{eq:local-BM-F}
\sum_{k=1}^{n} \int |\grad F_k|_{g_K}^2 dV_K \geq (n-1) \sum_{k=1}^{n}\int F_k^2 dV_K .
\end{align}
The right-hand-side of (\ref{eq:local-BM-F}) is easy to evaluate using (\ref{eq:ab}) and Lemma \ref{lem:PK}:
\[
\sum_{k=1}^{n} F_k^2=|F|_{\delta}^2 \geq \beta^{-1} |F|_{p_K}^2=\beta^{-1}(|\grad f|_{g_K}^2+\tau^2 f^2).
\]

We now turn to the left-hand side of (\ref{eq:local-BM-F}).
Given $\theta_0\in \mathbb{S}^n$,  let $\{e_{i}\}_{i=1}^{n-1}$ be a local $g_K$-orthonormal frame for $T \S^{n-1}$ around $\theta_0$ which diagonalizes $\Theta$ at $\theta_0$, where $\Theta$ is the symmetric $(0,2)$-tensor given by
\[
\Theta :=\operatorname{Hess}^{\ast}_K f+\tau f g_K . 
\]
We use $\Theta^{\sharp}$ to denote the $(1,1)$ version of $\Theta$ after raising indices using the metric $g_K$, and abbreviate $\nabla = \nabla_K$. Let $\lambda_i$ denote the eigenvalue of $\Theta$ at $\theta_0$ corresponding to $e_i$.
In view of \eqref{centro-affine}, (\ref{eq:conjugate}) and \eqref{hess ast def}, we have 
\begin{align*}
	& dF_k(e_i) =\sscalar{ D_{e_i} F,E_k} \\
	&=\sscalar{ dX_K(\nabla_{e_i}\grad_{g_K} f)-g_K(e_i,\grad_{g_K} f)X_K + \tau f dX_K(e_i)+\tau df(e_i)X_K,E_k} \\
	&=\sscalar{ dX_K(\nabla_{e_i} \grad_{g_K} f)+ \tau f dX_K(e_i)+(\tau-1)df(e_i)X_K,E_k } \\
	&=\sscalar{ dX_K((\operatorname{Hess}^{\ast}_K f)^{\sharp}(e_i))+ \tau f dX_K (e_i)+(\tau-1)df(e_i)X_K ,E_k} \\
	& = \sscalar{dX_K( \Theta^{\sharp}(e_i)) + (\tau-1)df(e_i)X_K , E_k} . 
\end{align*}	
Therefore, at $\theta_0$ we have $dF_k(e_i)=\langle \lambda_{i} dX_K(e_i)+(\tau-1)df(e_i)X_K,E_k\rangle$ and
\begin{align*}
|\operatorname{grad}F_k|_{g_K}^2= \sum_{i=1}^{n-1} dF_k(e_i)^2 = \sum_{i=1}^{n-1} \sscalar{ \lambda_{i}dX_K(e_i)+(\tau-1)df(e_i)X_K,E_k}^2.
\end{align*}
Now from Lemma \ref{lem:PK}, we find that at $\theta_0$:
\begin{align*}
\sum_{k=1}^{n}|\operatorname{grad}F_k|_{g_K}^2&=\sum_{i=1}^{n-1}\sum_{k=1}^{n}\langle \lambda_{i}dX_K(e_i)+(\tau-1)df(e_i)X_K,E_k\rangle^2\\
&=\sum_{i=1}^{n-1} |\lambda_{i}dX_K(e_i)+(\tau-1)df(e_i)X_K|_{\delta}^2\\
&\leq \alpha^{-1}\sum_{i=1}^{n-1}|\lambda_{i}dX_K(e_i)+(\tau-1)df(e_i)X_K|_{p_K}^2\\
&= \alpha^{-1}\sum_{i=1}^{n-1} (\lambda_{i}^2+(\tau-1)^2 df(e_i)^2)\\
&= \alpha^{-1}(|\Theta|_{g_K}^2+(\tau-1)^2|\grad f|_{g_K}^2).
\end{align*}
Since both sides of the inequality do not depend on the local frame, this holds for all $\theta \in \S^{n-1}$. 

Plugging this into (\ref{eq:local-BM-F}), denoting $\sigma = \frac{\alpha}{\beta} = \frac{1}{\gamma} \in (0,1)$, applying (\ref{eq:Laplacian}) and integrating by parts, we obtain
\begin{align*}
& (n-1) \sigma \int (|\grad f|_{g_K}^2+\tau^2 f^2) dV_K\\
& \leq \int (|\Theta|_{g_K}^2+(\tau-1)^2|\grad  f|_{g_K}^2) dV_K \\
& = \int (|\operatorname{Hess}^{\ast}_K f|_{g_K}^2+2 \tau f \Delta_K f +(n-1)\tau^2 f^2 +(\tau-1)^2|\grad  f|_{g_K}^2) dV_K \\
& = \int (|\operatorname{Hess}^{\ast}_K f|_{g_K}^2+((\tau-1)^2-2\tau)|\grad f|_{g_K}^2+(n-1)\tau^2 f^2) dV_K . 
\end{align*}
Applying the centro-affine Bochner formula (\ref{eq:Bochner}), we find
\begin{align*}
	\int & \left [  (\Delta_K f)^2- (n+1+(n-1)\sigma -(\tau-2)^2)|\grad f|_{g_K}^2 \right .   \\
	                   & \left . +(n-1) \tau^2(1-\sigma)f^2 \right ] dV_K \geq 0.
\end{align*}
Integrating by parts again, we conclude that for all $C^2$-smooth even functions $f$ and choice of $\tau \in \R$:
\[
\int P_{\sigma,\tau}(-\Delta_K)(f) f dV_K \geq 0 ,
\]
where $P_{\sigma,\tau}$ is the following quadratic polynomial:
\[
 P_{\sigma,\tau}(t) := t^2-(n+1+(n-1)\sigma -(\tau-2)^2) t+ (n-1)\tau^2 (1-\sigma).
\] 

Consequently, the spectral theorem implies
\[
P_{\sigma,\tau}(\lambda_{1,e}(-\Delta_K)) \geq 0 \;\;\; \forall \tau \in \R .
\] 
It is straightforward to check that we have $P_{\sigma,\tau}(n-1) < 0$ for all $\tau-1 \in [0,\sqrt{ n \sigma/2}]$. Since $\lambda_{1,e}(-\Delta_K) > \lambda_1(-\Delta_K) = n-1$, it follows that $\lambda_{1,e}(-\Delta_K) \geq \sup_{\tau -1 \in [0,\sqrt{ n \sigma/2}]} R_{\sigma,\tau}$, where $R_{\sigma,\tau}$ denotes the right root of $P_{\sigma,\tau}$. 

To obtain a more elegant and useful bound, we make use of $\tau = 1 + \sigma$ (note that $\sigma \in [0,\sqrt{ n \sigma/2}]$), in which case a calculation verifies
\[
P_{\sigma,1+\sigma}(n-1 +\sigma (n+1)) = 2\sigma(\sigma^2-1) < 0 ,
\]
since $\sigma \in (0,1)$. 
We thus deduce that
\[
\lambda_{1,e}(-\Delta_K) > n-1 + \sigma(n+1) = n - p_\gamma,
\]
thereby concluding the proof of Theorem \ref{thm:main}.

When $\gamma=1$, we see that $P_{1,2}(2n) = 0$, and the above argument yields $\lambda_{1,e}(-\Delta_K) \geq 2n$. 
Remark \ref{rem:upper-bound} already implies that in that case $K$ must be a centered ellipsoid, but this is, of course, elementary: $\gamma=1$ implies that $P_{\ell K} = \alpha \delta$, and hence $\norm{x}^2_{\ell K} = \alpha |x|^2 + \scalar{x,v}$ for some $v \in \R^n$; but since $K$ is origin-symmetric, we must have $v = 0$ and so $\ell K$ is a Euclidean ball. In that case $\Delta_{\ell K}$ coincides with the usual Laplace-Beltrami operator $\Delta$ on $\S^{n-1}$, for which we have $\lambda_{1,e}(-\Delta) = 2n$ (see e.g. \cite{EMilman-Isospectral-HBM}), thereby concluding the proof of Theorem \ref{thm:spectral}. 
\end{proof}

\begin{remark}
The section $F : \S^{n-1} \rightarrow T_{\partial K} \R^n$, which we used in the proof, may be interpreted as the anisotropic gradient of $f : \S^{n-1} \rightarrow \R$ on $\partial K$ after extending it as a $\tau$-homogeneous function to $\R^n$. Namely, pushing forward $f$ onto $\partial K$ via $X_K$ and denoting by $\hat f$ its $\tau$-homogeneous extension to $\R^n$, one may check that $\hat f$'s gradient on $\partial K$ with respect to the anisotropic Riemannian metric $P_K$ is precisely $F$:
\[
\operatorname{grad}_{P_K} \hat f = dX_K(\operatorname{grad}_{g_K} f) + \tau f X_K = F .  
\]
It is therefore not surprising that we need to use the homogeneity parameter $\tau = 2$ for obtaining the sharp $p_{\gamma} = -n$ when $\gamma=1$ (corresponding to the case that $K$ is a Euclidean ball and $\Delta_K = \Delta$), as the even eigenfunctions of $\Delta$ corresponding to the eigenvalue $\lambda_{1,e}(-\Delta) = 2 n$ are quadratic polynomials.
\end{remark}

\def\cprime{$'$} \def\textasciitilde{$\sim$}


\begin{thebibliography}{10}

\bibitem{Andrews-GaussCurvatureFlowForCurves}
B.~Andrews.
\newblock Evolving convex curves.
\newblock {\em Calc. Var. Partial Differential Equations}, 7(4):315--371, 1998.

\bibitem{Andrews-FateOfWornStones}
B.~Andrews.
\newblock Gauss curvature flow: the fate of the rolling stones.
\newblock {\em Invent. Math.}, 138(1):151--161, 1999.

\bibitem{Andrews-PowerOfGaussCurvatureFlow}
B.~Andrews.
\newblock Motion of hypersurfaces by {G}auss curvature.
\newblock {\em Pacific J. Math.}, 195(1):1--34, 2000.

\bibitem{Andrews-ClassificationOfLimitingShapesOfIsotropicCurveFlows}
B.~Andrews.
\newblock Classification of limiting shapes for isotropic curve flows.
\newblock {\em J. Amer. Math. Soc.}, 16(2):443--459, 2003.

\bibitem{AndrewsGuanNi-PowerOfGaussCurvatureFlow}
B.~Andrews, P.~Guan, and L.~Ni.
\newblock Flow by powers of the {G}auss curvature.
\newblock {\em Adv. Math.}, 299:174--201, 2016.

\bibitem{BBC-SmoothnessOfLpMinkowski}
G.~Bianchi, K.~J. B\"{o}r\"{o}czky, and A.~Colesanti.
\newblock Smoothness in the {$L_p$} {M}inkowski problem for {$p<1$}.
\newblock {\em J. Geom. Anal.}, 30(1):680--705, 2020.

\bibitem{BBCY-SubcriticalLpMinkowski}
G.~Bianchi, K.~J. B\"{o}r\"{o}czky, A.~Colesanti, and D.~Yang.
\newblock The {$L_p$}-{M}inkowski problem for {$-n<p<1$}.
\newblock {\em Adv. Math.}, 341:493--535, 2019.

\bibitem{BonnesenFenchelBook}
T.~Bonnesen and W.~Fenchel.
\newblock {\em Theory of convex bodies}.
\newblock BCS Associates, Moscow, ID, 1987.
\newblock Translated from the German and edited by L. Boron, C. Christenson and
  B. Smith.

\bibitem{Boroczky-LogBMSurvey}
K.~J. B\"{o}r\"{o}czky.
\newblock The logarithmic {M}inkowski conjecture and the ${L}_p$-{M}inkowski
  problem.
\newblock arXiv.org/abs/2210.00194, 2022.

\bibitem{BHZ-DiscreteLogMinkowski}
K.~J. B\"{o}r\"{o}czky, P.~Heged\"{u}s, and G.~Zhu.
\newblock On the discrete logarithmic {M}inkowski problem.
\newblock {\em Int. Math. Res. Not. IMRN}, (6):1807--1838, 2016.

\bibitem{BoroczkyHenk-ConeVolumeMeasure}
K.~J. B\"{o}r\"{o}czky and M.~Henk.
\newblock Cone-volume measure of general centered convex bodies.
\newblock {\em Adv. Math.}, 286:703--721, 2016.

\bibitem{BoroczkyKalantz-LogBMWithSymmetries}
K.~J. B\"{o}r\"{o}czky and P.~Kalantzopoulos.
\newblock Log-{B}runn-{M}inkowski inequality under symmetry.
\newblock {\em Trans. Amer. Math. Soc.}, 375(8):5987--6013, 2022.

\bibitem{BLYZ-logBMInPlane}
K.~J. B\"or\"oczky, E.~Lutwak, D.~Yang, and G.~Zhang.
\newblock The log-{B}runn-{M}inkowski inequality.
\newblock {\em Adv. Math.}, 231(3-4):1974--1997, 2012.

\bibitem{BLYZ-logMinkowskiProblem}
K.~J. B\"or\"oczky, E.~Lutwak, D.~Yang, and G.~Zhang.
\newblock The logarithmic {M}inkowski problem.
\newblock {\em J. Amer. Math. Soc.}, 26(3):831--852, 2013.

\bibitem{BCD-PowerOfGaussCurvatureFlow}
S.~Brendle, K.~Choi, and P.~Daskalopoulos.
\newblock Asymptotic behavior of flows by powers of the {G}aussian curvature.
\newblock {\em Acta Math.}, 219(1):1--16, 2017.

\bibitem{CaffarelliHigherHolderRegularity}
L.~A. Caffarelli.
\newblock Interior {$W^{2,p}$} estimates for solutions of the
  {M}onge-{A}mp\`ere equation.
\newblock {\em Ann. of Math. (2)}, 131(1):135--150, 1990.

\bibitem{Caffarelli-StrictConvexity}
L.~A. Caffarelli.
\newblock A localization property of viscosity solutions to the
  {M}onge-{A}mp\`ere equation and their strict convexity.
\newblock {\em Ann. of Math. (2)}, 131(1):129--134, 1990.

\bibitem{Calabi-CompleteAffineHyperspheres}
E.~Calabi.
\newblock Complete affine hyperspheres. {I}.
\newblock In {\em Symposia {M}athematica, {V}ol. {X} ({C}onvegno di {G}eometria
  {D}ifferenziale, {INDAM}, {R}ome, 1971)}, pages 19--38. 1972.

\bibitem{ChenEtAl-LocalToGlobalForLogBM}
S.~Chen, Y.~Huang, Q.-R. Li, and J.~Liu.
\newblock The ${L}_p$-{B}runn--{M}inkowski inequality for $p<1$.
\newblock {\em Adv. Math.}, 368:107166, 2020.

\bibitem{ChenLiZhu-LpMongeAmpere}
S.~Chen, Q.-R. Li, and G.~Zhu.
\newblock On the {$L_p$} {M}onge-{A}mp\`ere equation.
\newblock {\em J. Differential Equations}, 263(8):4997--5011, 2017.

\bibitem{ChenLiZhu-logMinkowski}
S.~Chen, Q.-R. Li, and G.~Zhu.
\newblock The logarithmic {M}inkowski problem for non-symmetric measures.
\newblock {\em Trans. Amer. Math. Soc.}, 371(4):2623--2641, 2019.

\bibitem{ChengYau-RegularityInMinkowskiProblem}
S.~Y. Cheng and S.~T. Yau.
\newblock On the regularity of the solution of the {$n$}-dimensional
  {M}inkowski problem.
\newblock {\em Comm. Pure Appl. Math.}, 29(5):495--516, 1976.

\bibitem{ChoiDaskalopoulos-UniquenessInLpMinkowski}
K.~Choi and P.~Daskalopoulos.
\newblock Uniqueness of closed self-similar solutions to the {G}auss curvature
  flow.
\newblock arxiv.org/abs/1609.05487, 2016.

\bibitem{ChouWang-LpMinkowski}
K.-S. Chou and X.-J. Wang.
\newblock The {$L_p$}-{M}inkowski problem and the {M}inkowski problem in
  centroaffine geometry.
\newblock {\em Adv. Math.}, 205(1):33--83, 2006.

\bibitem{Chow-PowerOfGaussCurvatureFlow}
B.~Chow.
\newblock Deforming convex hypersurfaces by the {$n$}th root of the {G}aussian
  curvature.
\newblock {\em J. Differential Geom.}, 22(1):117--138, 1985.

\bibitem{ColesantiPoincareInequality}
A.~Colesanti.
\newblock From the {B}runn-{M}inkowski inequality to a class of
  {P}oincar\'e-type inequalities.
\newblock {\em Commun. Contemp. Math.}, 10(5):765--772, 2008.

\bibitem{ColesantiLivshyts-LocalpBMUniquenessForBall}
A.~Colesanti and G.~Livshyts.
\newblock A note on the quantitative local version of the
  log-{B}runn-{M}inkowski inequality.
\newblock In {\em The mathematical legacy of {V}ictor {L}omonosov---operator
  theory}, volume~2 of {\em Adv. Anal. Geom.}, pages 85--98. De Gruyter,
  Berlin, 2020.

\bibitem{CLM-LogBMForBall}
A.~Colesanti, G.~V. Livshyts, and A.~Marsiglietti.
\newblock On the stability of {B}runn-{M}inkowski type inequalities.
\newblock {\em J. Funct. Anal.}, 273(3):1120--1139, 2017.

\bibitem{Firey-Sums}
W.~J. Firey.
\newblock {$p$}-means of convex bodies.
\newblock {\em Math. Scand.}, 10:17--24, 1962.

\bibitem{Firey-ShapesOfWornStones}
W.~J. Firey.
\newblock Shapes of worn stones.
\newblock {\em Mathematika}, 21:1--11, 1974.

\bibitem{GuanLin-Unpublished}
P.~Guan and C.-S. Lin.
\newblock On equation $\text{det}(u_{ij} + \delta_{ij} u) = u^p f$ on ${S}^n$.
\newblock Preprint, 1999.

\bibitem{GLW-SupercriticalLpMinkowski}
Q.~Guang, Q.-R. Li, and X.-J. Wang.
\newblock The ${L}_p$-{M}inkowski problem with super-critical exponents.
\newblock arXiv.org/abs/2203.05099, 2022.

\bibitem{HeLiWang-MultipleSupercriticalLpMinkowski}
Y.~He, Q.-R. Li, and X.-J. Wang.
\newblock Multiple solutions of the {$L_p$}-{M}inkowski problem.
\newblock {\em Calc. Var. Partial Differential Equations}, 55(5):Art. 117, 13,
  2016.

\bibitem{HKL-LogBMForSubsets}
J.~Hosle, A.~V. Kolesnikov, and G.~V. Livshyts.
\newblock On the {$L_p$}-{B}runn-{M}inkowski and dimensional
  {B}runn-{M}inkowski conjectures for log-concave measures.
\newblock {\em J. Geom. Anal.}, 31(6):5799--5836, 2021.

\bibitem{HuangLiuXu-UniquenessInLpMinkowski}
Y.~Huang, J.~Liu, and L.~Xu.
\newblock On the uniqueness of {$L_p$}-{M}inkowski problems: the constant
  {$p$}-curvature case in {$\Bbb{R}^3$}.
\newblock {\em Adv. Math.}, 281:906--927, 2015.

\bibitem{HuangLu-RegularityInLpMinkowski}
Y.~Huang and Q.~Lu.
\newblock On the regularity of the {$L_p$} {M}inkowski problem.
\newblock {\em Adv. in Appl. Math.}, 50(2):268--280, 2013.

\bibitem{HLYZ-DiscreteLpMinkowski}
D.~Hug, E.~Lutwak, D.~Yang, and G.~Zhang.
\newblock On the {$L_p$} {M}inkowski problem for polytopes.
\newblock {\em Discrete Comput. Geom.}, 33(4):699--715, 2005.

\bibitem{IvakiEMilman-GeneralizedBCD}
M.~N. Ivaki and E.~Milman.
\newblock Uniqueness of solutions to a class of isotropic curvature problems.
\newblock arxiv.org/abs/2304.12839, to appear in Adv. Math., 2023.

\bibitem{JianLuWang-NonUniquenessInSubcriticalLpMinkowski}
H.~Jian, J.~Lu, and X.-J. Wang.
\newblock Nonuniqueness of solutions to the {$L_p$}-{M}inkowski problem.
\newblock {\em Adv. Math.}, 281:845--856, 2015.

\bibitem{JianLuZhu-UnconditionalCriticalLpMinkowski}
H.~Jian, J.~Lu, and G.~Zhu.
\newblock Mirror symmetric solutions to the centro-affine {M}inkowski problem.
\newblock {\em Calc. Var. Partial Differential Equations}, 55(2):Art. 41, 22,
  2016.

\bibitem{Klartag-AlmostKLS}
B.~Klartag.
\newblock Logarithmic bounds for isoperimetry and slices of convex sets.
\newblock {\em Ars Inven. Anal.}, 2023.
\newblock No 4, 17 pages.

\bibitem{Kolesnikov-OTOnSphere}
A.~V. Kolesnikov.
\newblock Mass transportation functionals on the sphere with applications to
  the logarithmic {M}inkowski problem.
\newblock {\em Mosc. Math. J.}, 20(1):67--91, 2020.

\bibitem{KolesnikovLivshyts-ParticularFunctionsInLogBM}
A.~V. Kolesnikov and G.~V. Livshyts.
\newblock On the local version of the {L}og-{B}runn--{M}inkowski conjecture and
  some new related geometric inequalities.
\newblock {\em Int. Math. Res. Not. IMRN}, (18):14427--14453, 2022.

\bibitem{KolesnikovEMilman-LocalLpBM}
A.~V. Kolesnikov and E.~Milman.
\newblock Local {$L^p$}-{B}runn-{M}inkowski inequalities for {$p<1$}.
\newblock {\em Mem. Amer. Math. Soc.}, 277(1360):v+78, 2022.

\bibitem{Lewy-RegularityInMinkowskiProblem}
H.~Lewy.
\newblock On differential geometry in the large. {I}. {M}inkowski's problem.
\newblock {\em Trans. Amer. Math. Soc.}, 43(2):258--270, 1938.

\bibitem{LSZH-Book}
A.-M. Li, U.~Simon, G.~Zhao, and Z.~Hu.
\newblock {\em Global affine differential geometry of hypersurfaces}, volume~11
  of {\em De Gruyter Expositions in Mathematics}.
\newblock De Gruyter, Berlin, extended edition, 2015.

\bibitem{Li-NonUniquenessInCriticalLpMinkowskiProblem}
Q.-R. Li.
\newblock Infinitely many solutions for centro-affine {M}inkowski problem.
\newblock {\em Int. Math. Res. Not. IMRN}, (18):5577--5596, 2019.

\bibitem{LLL-NonUniquenessInDualLpMinkowskiProblem}
Q.-R. Li, J.~Liu, and J.~Lu.
\newblock Nonuniqueness of solutions to the {$L_p$} dual {M}inkowski problem.
\newblock {\em Int. Math. Res. Not. IMRN}, (12):9114--9150, 2022.

\bibitem{LMNZ-BMforMeasures}
G.~Livshyts, A.~Marsiglietti, P.~Nayar, and A.~Zvavitch.
\newblock On the {B}runn-{M}inkowski inequality for general measures with
  applications to new isoperimetric-type inequalities.
\newblock {\em Trans. Amer. Math. Soc.}, 369(12):8725--8742, 2017.

\bibitem{LuWang-CriticalAndSupercriticalLpMinkowski}
J.~Lu and X.-J. Wang.
\newblock Rotationally symmetric solutions to the {$L_p$}-{M}inkowski problem.
\newblock {\em J. Differential Equations}, 254(3):983--1005, 2013.

\bibitem{Lutwak-Firey-Sums}
E.~Lutwak.
\newblock The {B}runn-{M}inkowski-{F}irey theory. {I}. {M}ixed volumes and the
  {M}inkowski problem.
\newblock {\em J. Differential Geom.}, 38(1):131--150, 1993.

\bibitem{Lutwak-Firey-Sums-II}
E.~Lutwak.
\newblock The {B}runn-{M}inkowski-{F}irey theory. {II}. {A}ffine and geominimal
  surface areas.
\newblock {\em Adv. Math.}, 118(2):244--294, 1996.

\bibitem{LutwakOliker-RegularityInLpMinkowski}
E.~Lutwak and V.~Oliker.
\newblock On the regularity of solutions to a generalization of the {M}inkowski
  problem.
\newblock {\em J. Differential Geom.}, 41(1):227--246, 1995.

\bibitem{LYZ-LpAffineIsoperimetricInqs}
E.~Lutwak, D.~Yang, and G.~Zhang.
\newblock {$L_p$} affine isoperimetric inequalities.
\newblock {\em J. Differential Geom.}, 56(1):111--132, 2000.

\bibitem{LYZ-SharpAffineLpSobolevInqs}
E.~Lutwak, D.~Yang, and G.~Zhang.
\newblock Sharp affine {$L_p$} {S}obolev inequalities.
\newblock {\em J. Differential Geom.}, 62(1):17--38, 2002.

\bibitem{LYZ-LpMinkowskiProblem}
E.~Lutwak, D.~Yang, and G.~Zhang.
\newblock On the {$L_p$}-{M}inkowski problem.
\newblock {\em Trans. Amer. Math. Soc.}, 356(11):4359--4370, 2004.

\bibitem{LYZ-LpJohnEllipsoids}
E.~Lutwak, D.~Yang, and G.~Zhang.
\newblock {$L_p$} {J}ohn ellipsoids.
\newblock {\em Proc. London Math. Soc. (3)}, 90(2):497--520, 2005.

\bibitem{MaLogBMInPlane}
L.~Ma.
\newblock A new proof of the log-{B}runn-{M}inkowski inequality.
\newblock {\em Geom. Dedicata}, 177:75--82, 2015.

\bibitem{EMilman-IsomorphicLogMinkowski}
E.~Milman.
\newblock Centro-affine differential geometry and the log-{M}inkowski problem.
\newblock arxiv.org/abs/2104.12408, to appear in J. Eur. Math. Soc., 2021.

\bibitem{EMilman-Isospectral-HBM}
E.~Milman.
\newblock A sharp centro-affine isospectral inequality of
  {S}zeg\"{o}--{W}einberger type and the ${L}^p$-{M}inkowski problem.
\newblock arxiv.org/abs/2103.02994, to appear in J. Differential Geom., 2021.

\bibitem{Nirenberg-WeylAndMinkowskiProblems}
L.~Nirenberg.
\newblock The {W}eyl and {M}inkowski problems in differential geometry in the
  large.
\newblock {\em Comm. Pure Appl. Math.}, 6:337--394, 1953.

\bibitem{NomizuSasaki-Book}
K.~Nomizu and T.~Sasaki.
\newblock {\em Affine differential geometry}, volume 111 of {\em Cambridge
  Tracts in Mathematics}.
\newblock Cambridge University Press, Cambridge, 1994.
\newblock Geometry of affine immersions.

\bibitem{Pogorelov-MinkowskiProblemBook}
A.~V. Pogorelov.
\newblock {\em The {M}inkowski multidimensional problem}.
\newblock V. H. Winston \& Sons, Washington, D.C.; Halsted Press [John Wiley
  \&\ Sons], New York-Toronto-London, 1978.
\newblock Translated from the Russian by Vladimir Oliker, Introduction by Louis
  Nirenberg, Scripta Series in Mathematics.

\bibitem{Putterman-LocalToGlobalForLpBM}
E.~Putterman.
\newblock Equivalence of the local and global versions of the
  {$L^p$}-{B}runn--{M}inkowski inequality.
\newblock {\em J. Func. Anal.}, 280(9):108956, 2021.

\bibitem{Rotem-logBM}
L.~Rotem.
\newblock A letter: The log-{B}runn-{M}inkowski inequality for complex bodies.
\newblock Unpublished, arxiv.org/abs/1412.5321, 2014.

\bibitem{Saroglou-logBM1}
C.~Saroglou.
\newblock Remarks on the conjectured log-{B}runn-{M}inkowski inequality.
\newblock {\em Geom. Dedicata}, 177:353--365, 2015.

\bibitem{Saroglou-logBM2}
C.~Saroglou.
\newblock More on logarithmic sums of convex bodies.
\newblock {\em Mathematika}, 62(3):818--841, 2016.

\bibitem{Saroglou-GeneralizedBCD}
C.~Saroglou.
\newblock On a non-homogeneous version of a problem of {F}irey.
\newblock {\em Math. Ann.}, 382(3-4):1059--1090, 2022.

\bibitem{Schneider-Book-2ndEd}
R.~Schneider.
\newblock {\em Convex bodies: the {B}runn-{M}inkowski theory}, volume 151 of
  {\em Encyclopedia of Mathematics and its Applications}.
\newblock Cambridge University Press, Cambridge, second expanded edition, 2014.

\bibitem{StancuDiscreteLogBMInPlane}
A.~Stancu.
\newblock The discrete planar {$L_0$}-{M}inkowski problem.
\newblock {\em Adv. Math.}, 167(1):160--174, 2002.

\bibitem{Stancu-UniquenessInDiscretePlanarL0Minkowski}
A.~Stancu.
\newblock On the number of solutions to the discrete two-dimensional
  {$L_0$}-{M}inkowski problem.
\newblock {\em Adv. Math.}, 180(1):290--323, 2003.

\bibitem{Stancu-NecessaryCondInDiscretePlanarL0Minkowski}
A.~Stancu.
\newblock The necessary condition for the discrete {$L_0$}-{M}inkowski problem
  in {$\Bbb R^2$}.
\newblock {\em J. Geom.}, 88(1-2):162--168, 2008.

\bibitem{Tzitzeica1908}
M.~G. Tzitz{\'e}ica.
\newblock Sur une nouvelle classe de surfaces (2\`eme partie).
\newblock {\em Rendiconti del Circolo Matematico di Palermo},
  25(28):180--187(210--216), 1908(9).

\bibitem{Urbas-PositivePowersOfGaussCurvatureFlow}
J.~Urbas.
\newblock Complete noncompact self-similar solutions of {G}auss curvature
  flows. {I}. {P}ositive powers.
\newblock {\em Math. Ann.}, 311(2):251--274, 1998.

\bibitem{Urbas-NegativePowersOfGaussCurvatureFlow}
J.~Urbas.
\newblock Complete noncompact self-similar solutions of {G}auss curvature
  flows. {II}. {N}egative powers.
\newblock {\em Adv. Differential Equations}, 4(3):323--346, 1999.

\bibitem{VanHandel-LogMinkowskiForZonoids}
R.~van Handel.
\newblock The local logarithmic Brunn-Minkowski inequality for zonoids.
\newblock In {\em Geometric aspects of functional analysis, Israel Seminar
  2020-2022}, volume 2327 of {\em Lecture Notes in Math.}, chapter~14.
  Springer, 2023.
\newblock arxiv.org/abs/2202.09429.

\bibitem{WeiXiong-AnisotropicCurvatureFlow}
Y.~Wei and C.~Xiong.
\newblock A fully nonlinear locally constrained anisotropic curvature flow.
\newblock {\em Nonlinear Anal.}, 217:Paper No. 112760, 29, 2022.

\bibitem{Xi-ReverseLogBM}
D.~Xi.
\newblock The reverse-log-Brunn-Minkowski inequality.
\newblock arxiv.org/abs/2307.04266, 2023.

\bibitem{XiLeng-DarAndLogBMInPlane}
D.~Xi and G.~Leng.
\newblock Dar's conjecture and the log-{B}runn-{M}inkowski inequality.
\newblock {\em J. Differential Geom.}, 103(1):145--189, 2016.

\bibitem{Xia-AnisotropicMinkowski}
C.~Xia.
\newblock On an anisotropic {M}inkowski problem.
\newblock {\em Indiana Univ. Math. J.}, 62(5):1399--1430, 2013.

\bibitem{Zhu-logMinkowskiForPolytopes}
G.~Zhu.
\newblock The logarithmic {M}inkowski problem for polytopes.
\newblock {\em Adv. Math.}, 262:909--931, 2014.

\bibitem{Zhu-CentroAffineMinkowskiForPolytopes}
G.~Zhu.
\newblock The centro-affine {M}inkowski problem for polytopes.
\newblock {\em J. Differential Geom.}, 101(1):159--174, 2015.

\bibitem{Zhu-LpMinkowskiForPolytopes}
G.~Zhu.
\newblock The {$L_p$} {M}inkowski problem for polytopes for {$0<p<1$}.
\newblock {\em J. Funct. Anal.}, 269(4):1070--1094, 2015.

\bibitem{Zhu-ContinuityInLpMinkowski}
G.~Zhu.
\newblock Continuity of the solution to the {$L_p$} {M}inkowski problem.
\newblock {\em Proc. Amer. Math. Soc.}, 145(1):379--386, 2017.

\bibitem{Zhu-LpMinkowskiForPolytopesAndNegativeP}
G.~Zhu.
\newblock The {$L_p$} {M}inkowski problem for polytopes for {$p<0$}.
\newblock {\em Indiana Univ. Math. J.}, 66(4):1333--1350, 2017.

\end{thebibliography}
\end{document}